
 \documentclass[11pt]{article}
\textwidth 150mm \textheight 210mm \oddsidemargin 5mm
\evensidemargin 5mm \topmargin 0pt
\parindent 6mm
\normalsize
\usepackage{amsfonts}
\usepackage{amsmath}
\usepackage{amssymb}
\usepackage{latexsym}

\newtheorem{thm}{Theorem}
\newtheorem{prop}[thm]{Proposition}

\newtheorem{lemma}[thm]{Lemma}

\newtheorem{remark}[thm]{Remark}

\newenvironment{proof}{\noindent  Proof:\ }{\hspace*{\fill} $\Box $\\}

\newcommand{\N}{\mathbb{N}}

\newcommand{\PSL}{\mbox{\rm PSL}}

\newcommand{\Alt}{\mbox{\rm A}}
\newcommand{\Aut}{\mbox{\rm Aut}}

\newcommand{\wt}{\mbox{\rm wt}}

\newcommand{\gen}{\mbox{\rm gen}}

\newcommand{\Fix}{\mbox{\rm Fix}}
\newcommand{\ord}{\mbox{\rm ord}}
\newcommand{\OO}{\mbox{\rm O}}

\title{\bf On the Automorphism Group of a Binary Self-dual $[120, 60, 24]$ Code}
\author{ Stefka Bouyuklieva$^1$, Javier de la Cruz$^2$ and Wolfgang Willems$^3$\\
{\tiny $^1$} University of Veliko Tarnovo, Veliko Tarnovo, Bulgaria \\
{\tiny $^2$} Universidad del Norte, Barranquilla, Colombia\\
{ \tiny $^3$} Otto-von-Guericke Universit\"at, Magdeburg, Germany\\
}
\date{}

\begin{document}
\maketitle
\begin{abstract}
We prove that an automorphism of order 3 of a putative binary self-dual $[120, 60, 24]$ code $C$ has no fixed points.
Moreover, the order of the automorphism group of $C$ divides $2^a\cdot3 \cdot5\cdot7\cdot19\cdot23\cdot29$ with $a\in \mathbb{N}_0$.
 Automorphisms  of odd composite order $r$ may occur only for $r=15$, $57$ or $r=115$
with corresponding cycle structures $3 \cdot 5$-$(0,0,8;0)$, $3\cdot 19$-$(2,0,2;0)$ or $5 \cdot 23$-$(1,0,1;0)$ respectively.
 In case that all involutions  act
 fixed point freely we have $|\Aut(C)| \leq 920$, and $\Aut(C)$ is solvable if it contains an element of prime order $p \geq 7$.
 Moreover, the alternating group $\Alt_5$ is the only non-abelian composition factor which may occur.
\end{abstract}
\section{Introduction}
Let $C=C^\perp$ be a binary self-dual code of length $n$ and minimum distance
$d$. By results of Mallows-Sloane \cite{MS} and Rains \cite{Rains},
we have
\begin{equation} \label{cota-dual}  d \leq \left\{ \begin{array}{rl} 4 \lfloor \frac{n}{24} \rfloor + 4, & \mbox{if} \ n \not\equiv 22 \bmod \, 24 \\
       4 \lfloor \frac{n}{24} \rfloor + 6, & \mbox{if} \ n \equiv 22 \bmod \ 24,
   \end{array} \right.
\end{equation}
and $C$ is called extremal if equality holds.
 Due to interesting connections with designs, extremal codes of length $24m$ are of particular interest. Unfortunately, only for $m=1$ and $m=2$ such codes are known, namely the $[24, 12, 8]$ extended Golay  code and the $[48, 24, 12]$  extended quadratic residue code  (see \cite{Pless},\cite{HLTP}). To date the existence of no other extremal code of length $24m$ is known. In numerous papers the automorphism group of a $[72, 36, 16]$, respectively a $[96, 48, 20]$ code has been studied.   In case $n=72$ only $10$ nontrivial automorphism groups may occur. The largest has order $24$
(see Theorem 1 of \cite{Borello}).  For $n=96$, only the primes $2,3$ and $5$ may divide $|\Aut(C)|$ and the cycle structure of prime  order automorphisms are
$2$-$(48,0), 3$-$(30,6), 3$-$(32,0), 5$-$(18,0)$ (see  Theorem part a) in \cite{JW}). We would like to mention here that in part b) of the Theorem (the case where
elements of order $3$ act fixed point freely)  four group orders are missing, namely  $15, 30, 240$ and $480$. The gap is due to the  fact
that the existence of elements of order $15$ with six cycles of length $15$ and two cycles of length $2$  are not excluded in the given proof.

 In his thesis the second author considered the case \cite{wir-120}. It turned out that the only primes which may divide the order of the automorphism group are $2$, $3$, $5$, $7$, $19$, $23$ and $29$.
 More precisely, if $\sigma$ is an automorphism of $C$ of prime order $p$ then  its cycle structure is given by
\begin{equation}\label{table} {
\begin{tabular}{c|c|c}
p & \mbox{\rm number of $p$-cycles} & \mbox{ \rm number of fixed points} \\
\hline
$2$ & $48,60$ & $24,0$ \\
$3$ & $ 32,34,36,38,40$ & $24,18,12,6,0 $\\
$5$ & $24$ & $0$ \\
$7$ & $17$ & $1$ \\
$19$ & $6$ & $6$ \\
$23$ & $5$ & $5$ \\
$29$ & $4$ & $4$  \\
\end{tabular} }
\end{equation}

\mbox{}\\

This paper continues the investigation of automorphisms of extremal codes of length $120$. As a main result we prove the following.

\bigskip

\noindent
{\bf Theorem} \label{main} Let $C$ be an extremal self-dual code of length $120$.
\begin{itemize}
\item[\rm a)] If $\sigma$ is an automorphism of $C$ of prime order $3$, then $\sigma$ has no fixed points.
\item[\rm b)] If $p\neq2$, then $p^2 \nmid |\Aut(C)|$. Therefore $|\Aut(C)|$ divides $ 2^a\cdot3 \cdot5\cdot7\cdot19\cdot23\cdot29$ where $a \in \mathbb{N}_0$.
\item[\rm c)] If $\sigma$ is an automorphism of $C$ of odd composite order $r$, then $r=15,57$ or $r= 115$ and the cycle structure of $\sigma$ is given by
    $15$-$(0,0,8;0)$, $57$-$(2,0,2;0)$ and $115$-$(1,0,1;0)$.
\end{itemize}

In the last section we sharpen the bound on $|\Aut(C)|$ given in part b) in case that all involutions act fixed point freely.
The largest group which may occur in this case has order $920$. Moreover, the only possible nonabelian (simple) composition factor is the
alternating group $\Alt_5$.
The proof
 uses the fact (recently shown in \cite{BW}) that
 the automorphism group of an extremal self-dual code of length $120$
does not contain elements of order $ 2\cdot 19$ and $2\cdot29$.

\section{Preliminaries}

Let $C$ be a binary code and let  $\sigma$ be an automorphism of  $C$ of odd prime order $p$.
 Suppose that $\sigma$ has $c$ cycles of length $p$ and $f$ fixed points.
To be brief we say that $\sigma$ is of type $p$-$(c;f)$.
Without loss of generality we may assume that
\begin{equation} \label{forma-auto} \sigma=(1,2, \ldots, p)(p+1,p+2, \ldots , 2p) \ldots
((c-1)p+1,(c-1)p+2, \ldots ,cp). \end{equation}

 By $\Omega_{1},\ \Omega_{2},\ldots,\ \Omega_{c}$ we denote the cycle sets and by
$\Omega_{c+1},\ \Omega_{c+2},\ldots,\ \Omega_{c+f}$  the fixed points
of $\sigma$.
Furthermore let $F_{\sigma}(C)=\{v\in C\mid v\sigma=v\}$. If $\pi:F_{\sigma}(C)\rightarrow F_{2}^{c+f}$ denotes the map
defined by $\pi(v|_{\Omega_{i}})=v_{j}$ for some $j\in \Omega_{i}$ and
$i=1,2,\ldots,c+f$, then
 $\pi(F_{\sigma}(C))$ is a binary $[c+f,\frac{c+f}{2}]$ self-dual code.
Let $C_{\pi_1}$ be the subcode of $\pi(F_\sigma(C))$ which consists of
all codewords
 which have support in the first $c$ coordinates, and let $C_{\pi_2}$ be the subcode of all codewords in $\pi(F_\sigma(C))$
   which have support in the last $f$ coordinates. Thus a generator matrix of $\pi(F_\sigma(C))$ may be written in  the form

\begin{equation}\label{form-matrix}{\rm{gen}}(\pi(F_{\sigma}(C)))=\left(%
\begin{array}{cc}
  A & O \\
  O & B \\
  D & E \\
\end{array}%
\right),
\end{equation}
where $ (A \,O)$ is a generator matrix of $C_{\pi_1}$ and $(O\, B)$ is a
generator matrix
 of $C_{\pi_2}$, $O$ being the appropriate size zero matrix.
With this notation we have

\begin{lemma} {\rm \cite{pless-libro}} \label{pless} If $k_{1}=dim \, C_{\pi_1}$ and $k_{2}=dim \, C_{\pi_2}$, then the following holds true.
\begin{enumerate}
\item[\rm a)] {\rm(}Balance Principle\rm{)} $k_{1}-\frac{c}{2}=k_{2}-\frac{f}{2}$.
\item[\rm b)] $rank(D)=rank(E)=\frac{c+f}{2}-k_{1}-k_{2}.$
\item[\rm c)] Let $\mathcal{A}$ be the code of length $c$ generated by $A$, $\mathcal{A}_{D}$ the code of length $c$ generated by
  $A$ and $D$, $\mathcal{B}$ the code of length $f$ generated by $B$, and $\mathcal{B}_{E}$ the code of length
  $f$ generated by  $B$ and $E$. Then $\mathcal{A}^\perp=\mathcal{A}_{D}$ and $\mathcal{B}^\perp=\mathcal{B}_{E}$.
\end{enumerate}
\end{lemma}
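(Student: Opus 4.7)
The plan is to first establish part (c) using the classical duality between shortened and projected codes for a self-dual code, and then derive the balance principle (a) and the dimension formula in (b) as immediate consequences; the rank equalities in (b) require only a brief linear-independence argument.

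For the rank statement in (b), write $r$ for the number of rows of $(D\,E)$ in (\ref{form-matrix}), so that $k_1+k_2+r=(c+f)/2=\dim \pi(F_\sigma(C))$ since the rows of a generator matrix are linearly independent. Suppose some non-trivial linear combination of the rows of $(D\,E)$ has the form $(0,e)$. Then this codeword of $\pi(F_\sigma(C))$ has support in the last $f$ coordinates, so it lies in $C_{\pi_2}$ and therefore is a linear combination of the rows of $(O\,B)$. Subtracting would produce a non-trivial dependence among the rows of (\ref{form-matrix}), a contradiction. Hence the rows of $D$ are linearly independent, giving $\rank(D)=r$; by symmetry $\rank(E)=r$.

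For (c), I would invoke the standard fact that, for any linear code $\mathcal{C}\subseteq F_2^{c+f}$, the projection of $\mathcal{C}^\perp$ onto the first $c$ coordinates is the dual (inside $F_2^c$) of the shortened code $\{x\in F_2^c:(x,0)\in\mathcal{C}\}$. Applied to the self-dual code $\pi(F_\sigma(C))$, the shortened code on the first $c$ positions is exactly $\mathcal{A}$ (this is the definition of $C_{\pi_1}$), while the projection onto the first $c$ positions is generated by the first blocks of all rows of (\ref{form-matrix}) and so equals $\mathcal{A}_D$. Hence $\mathcal{A}^\perp=\mathcal{A}_D$, and $\mathcal{B}^\perp=\mathcal{B}_E$ follows by symmetry.

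Part (a) then drops out by dimension counting. From (c), $\dim \mathcal{A}_D=c-k_1$. An independence argument parallel to the one used in (b) shows that the rows of $A$ and $D$ taken together are linearly independent, so $\dim \mathcal{A}_D=k_1+r$. Equating yields $2k_1+r=c$, and symmetrically $2k_2+r=f$; subtracting produces $k_1-c/2=k_2-f/2$. The only genuinely non-trivial input is the shortened-vs-projected duality for a self-dual code, which is classical (and is in essence what the citation to Pless supplies); the rest is bookkeeping with dimensions and linear independence, so I do not anticipate any real obstacle.
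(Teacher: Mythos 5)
Your proof is correct. The paper does not prove this lemma at all---it is quoted from Huffman--Pless \cite{pless-libro}---and your argument (deducing everything from the classical shortened-versus-punctured duality $(\mathcal{C}^T)^\perp=(\mathcal{C}^\perp)_T$ applied to the self-dual code $\pi(F_\sigma(C))$, plus the linear-independence bookkeeping for the rows of $A$, $D$ and $(O\,B)$) is essentially the standard derivation that the cited reference supplies.
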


The following Lemma whose proof is trivial plays a central role when dealing with the code $\mathcal{A}$.

\begin{lemma}\label{dual-dis-1} If ${\cal A}$ is a binary linear code $[n, k]$ code with dual distance $1$, then (after a suitable permutation of the coordinates) ${\cal A}=(0|{\cal A}_1),$ where ${\cal A}_1$ is a linear $[n-1, k]$ code.  Furthermore ${\cal A}^\perp=(0|{\cal A}_1^\perp)\cup (1| {\cal A}_1^\perp)$.
\end{lemma}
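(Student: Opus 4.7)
The plan is to unwind the definition of dual distance $1$ and then do a bit of bookkeeping with the direct sum decomposition of coordinates.

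First I would note that ``dual distance $1$'' means there exists a weight-$1$ vector $e_i$ in $\mathcal{A}^\perp$. Orthogonality of $e_i$ against every $c \in \mathcal{A}$ forces $c_i = 0$ for all codewords $c \in \mathcal{A}$, i.e.\ the coordinate $i$ is identically zero on $\mathcal{A}$. After permuting so that $i$ becomes the first coordinate, every codeword has the form $(0 \mid c')$, and the projection onto the last $n-1$ coordinates is injective (we only deleted a zero column), so it gives a linear code $\mathcal{A}_1$ of length $n-1$ and the same dimension $k$. This establishes the first assertion $\mathcal{A} = (0 \mid \mathcal{A}_1)$.

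For the second assertion I would argue by two inclusions followed by a dimension count. If $w \in \mathcal{A}_1^\perp$ then for any $(0 \mid c') \in \mathcal{A}$ and any $a \in \{0,1\}$ we have $(a \mid w) \cdot (0 \mid c') = w \cdot c' = 0$, so both $(0 \mid w)$ and $(1 \mid w)$ lie in $\mathcal{A}^\perp$. Hence $(0 \mid \mathcal{A}_1^\perp) \cup (1 \mid \mathcal{A}_1^\perp) \subseteq \mathcal{A}^\perp$. Conversely, any $v = (a \mid w) \in \mathcal{A}^\perp$ satisfies $w \cdot c' = 0$ for every $c' \in \mathcal{A}_1$, which places $w$ in $\mathcal{A}_1^\perp$, proving the reverse inclusion. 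The union has cardinality $2 \cdot |\mathcal{A}_1^\perp| = 2 \cdot 2^{(n-1)-k} = 2^{n-k} = |\mathcal{A}^\perp|$, so equality follows (and in fact is already forced by the two inclusions together with the decomposition).

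There is no real obstacle; the statement is essentially a reformulation of the observation that a weight-$1$ dual vector identifies an all-zero coordinate of $\mathcal{A}$. The only thing to be slightly careful about is making the coordinate permutation explicit so that the statement $\mathcal{A}^\perp = (0 \mid \mathcal{A}_1^\perp) \cup (1 \mid \mathcal{A}_1^\perp)$ is literally (and not just up to permutation) correct in the new coordinates.
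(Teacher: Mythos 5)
Your proof is correct and complete; the paper itself omits the argument entirely (it declares the lemma's proof ``trivial''), and what you have written is exactly the standard argument the authors had in mind: a weight-$1$ dual vector forces an identically-zero coordinate, and the dual then splits as the free choice of that coordinate times $\mathcal{A}_1^\perp$, confirmed by the dimension count $2\cdot 2^{(n-1)-k}=2^{n-k}$.
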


\section{Cyclic structure of automorphisms of order 3}

Throughout this section let $C$ be a binary self-dual $[120,60,24]$ code.
As stated in (\ref{table})   an
automorphism of $C$ of
order 3 with $c$ cycles and $f$ fixed points satisfies $(c,f)=(32,24)$,
$(34,18)$, $(36,12)$, $(38,6)$ or $(40,0)$. We prove that only
the last case can occur; i.e., an element of order $3$ must act fixed point freely.

\begin{lemma} \label{3-32-cycles} $C$ does not have an automorphism of type $3$-$(32;24)$.
\end{lemma}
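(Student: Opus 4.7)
The plan is to apply Lemma~\ref{pless} to a putative automorphism $\sigma$ of type $3$-$(32;24)$ and to extract enough arithmetic and structural constraints to rule it out. With $c=32$ and $f=24$, $\pi(F_\sigma(C))$ is a binary self-dual $[56,28]$ code, so the Balance Principle gives $k_1=k_2+4$. Since $C$ has minimum distance $24$, any nonzero element of $C_{\pi_2}$ lifts to $\bar v\in F_\sigma(C)$ supported on the $24$ fixed points, hence must satisfy $\wt(\bar v)=24$ and $\bar v=\mathbf{1}_F$. Therefore $k_2\in\{0,1\}$, leaving the surviving cases $(k_1,k_2)=(4,0)$ and $(5,1)$.

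Next I would impose Type~II divisibility, valid because $C$ is an extremal self-dual code of length $24\cdot 5$. Any nonzero $v_1\in\mathcal{A}=C_{\pi_1}$ lifts to a codeword of weight $3\wt(v_1)\geq 24$ divisible by $4$, giving $\wt(v_1)\geq 8$ and $\wt(v_1)\equiv 0\pmod 4$; by Lemma~\ref{pless}c, $\mathcal{A}\subseteq \mathcal{A}^\perp=\mathcal{A}_D$, so $\mathcal{A}$ is a doubly-even self-orthogonal $[32,k_1,\geq 8]$ binary code. Every row $(D_j,E_j)$ of the $(D,E)$-block lifts to a codeword of weight $3\wt(D_j)+\wt(E_j)\geq 24$ divisible by $4$. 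In Case~1 ($k_2=0$), $E$ is a $24\times 24$ invertible block, so one may normalize $E=I_{24}$; then $\wt(D_j)\equiv 1\pmod 4$ with $\wt(D_j)\geq 9$, and pairwise $D_j\cdot D_k=\delta_{jk}\pmod 2$ by self-duality of $\pi(F_\sigma(C))$. In Case~2 ($k_2=1$), since $\mathbf{1}\in C$ is $\sigma$-fixed and $\mathbf{1}_F\in C_{\pi_2}$, the vector $\mathbf{1}_{32}$ must lie in $\mathcal{A}$; the $(D,E)$-block has $22$ rows whose $E$-parts span the even-weight code $\mathcal{B}_E=\mathcal{B}^\perp$.

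The hard part will be converting these restrictions into a numerical contradiction. My plan is to enumerate the very short list of doubly-even self-orthogonal $[32,k_1,\geq 8]$ codes for $k_1\in\{4,5\}$ (the Gleason-invariant weight polynomial has few solutions at this dimension), use MacWilliams to compute the weight distribution of $\mathcal{A}^\perp=\mathcal{A}_D$, and verify that it cannot accommodate $24$ (respectively $22$) pairwise-orthogonal vectors $D_j$ with $\wt(D_j)\equiv 1\pmod 4$ together with the prescribed constraints on sums of rows. An alternative viewpoint is that $E_\sigma(C)$, the complementary $\F_2\langle\sigma\rangle$-submodule to $F_\sigma(C)$, becomes a Hermitian self-dual $[32,16]$ code over $\F_4$ of Type~IV (even weights, since $C$ is Type~II) with minimum Hamming distance at least $12=2\lfloor 32/6\rfloor+2$, hence extremal Type~IV; incompatibility of such a code with the companion structure of $\pi(F_\sigma(C))$ would close the case. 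The principal obstacle is Case~1, where the elementary constraints are tight but not immediately contradictory, so a careful weight-enumerator or computer-assisted finish is likely required.
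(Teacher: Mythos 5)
Your setup is correct and coincides with the paper's: the Balance Principle gives $k_1=k_2+4$, the minimum distance forces $k_2\in\{0,1\}$, and $\mathcal{A}$ is a doubly-even self-orthogonal $[32,k_1,\geq 8]$ code. But what you have written is a plan, not a proof: the decisive contradiction is never derived, and you concede as much ("a careful weight-enumerator or computer-assisted finish is likely required"). The missing idea is the one the paper actually runs on. Since $\mathcal{A}^\perp$ is a $[32,28]$ (resp.\ $[32,27]$) code, the tables of best known codes force $d(\mathcal{A}^\perp)\leq 2$; hence there are codewords $(a|b)\in\pi(F_\sigma(C))$ with $\wt(a)\leq 2$ and therefore $\wt(b)\geq 24-3\wt(a)\geq 18$. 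Such heavy $b$-parts crammed into only $24$ fixed coordinates are what produce the contradictions: in the subcase $d(\mathcal{A}^\perp)=1$ via Lemma~\ref{dual-dis-1} and sums of two such words; in the subcase $d(\mathcal{A}^\perp)=2$ via the power moments, which force $B_2\geq 12$ while pairwise sums of weight-$2$ dual words force $B_2\leq 4$; and in the case $k_2=1$ immediately by adding $(0|\mathbf{1})$. None of this appears in your proposal, and your constraints on the rows $D_j$ (weight $\equiv 1\pmod 4$, $\geq 9$, $D_j\cdot D_k=\delta_{jk}$), while correct, are not shown to be contradictory.

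Your two suggested finishing routes are also not obviously viable as stated. Enumerating all doubly-even self-orthogonal $[32,4,\geq 8]$ and $[32,5,\geq 8]$ codes is not "a very short list" settled by Gleason-type invariance at dimension $4$ or $5$; the paper deliberately avoids any classification and works only with linear relations among the $A_j$ and $B_2$ coming from the first three power moments. The $\F_4$ route via $E_\sigma(C)$ correctly identifies a Hermitian self-dual $[32,16]$ code of minimum weight $\geq 12$, but you would still have to prove the claimed "incompatibility with the companion structure," which is left entirely open. As it stands the proposal has a genuine gap: the elementary constraints you list are consistent on their face, and without the dual-distance bound $d(\mathcal{A}^\perp)\leq 2$ (or an executed replacement for it) the case $3$-$(32;24)$ is not excluded.
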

\begin{proof} Let $\sigma \in \Aut(C)$ be of type $3$-$(32;24)$.
 For $\pi(F_\sigma(C))$ we take a generator matrix in the form
(\ref{form-matrix}). By the Balance Principle (see Lemma
\ref{pless}), we get $k_{1}=k_2+4$. Since $f=d=24$ we have $k_2=0$
or $1$. \\[-2ex]

First we consider the case $k_2=0$.
In this case we have $k_1=4$ and $\pi(F_\sigma(C))$ has
a generator matrix of the form
$$\left(%
\begin{array}{cc}
  A & 0 \\
  D & E \\
\end{array}%
\right).
$$
Furthermore, $\mathcal{A}$ is a $[32, 4,d'\geq 8]$ doubly-even
code and its dual $\mathcal{A}^\perp$ has parameters $[32,28,d'^{\perp}]$.
 Looking at the online table \cite{table-mark} we see that
$d(\mathcal{A}^\perp)=d'^{\perp}\le 2$.

If $d(\mathcal{A}^\perp)=1$ we may assume (without loss of
generality) that $a_1=(100\dots 0)\in \mathcal{A}^\perp$. Thus
$\pi(F_\sigma(C))$ contains a vector $(a_1|b_1)$ with $b_1\in
\mathbb{F}_2^{24}$. Since $$\wt(\pi^{-1}(a_1|b_1))=3+\wt(b_1)\geq
24$$ we get $\wt(b_1)=21$. According to Lemma \ref{dual-dis-1},
$\mathcal{A}=(0|\mathcal{A}_1)$ and
$\mathcal{A}^\perp=(0|\mathcal{A}_1^{\perp})\cup
(1|\mathcal{A}_1^{\perp})$. The code $\mathcal{A}_1^{\perp}$ has
parameters $[31, 27]$ and by
\cite{table-mark}, its minimum distance is 1 or 2. If
$d(\mathcal{A}_1^\perp)=1$, then (up to equivalence) there is a
codeword $(010\ldots 0|b_2)\in \pi(F_\sigma(C))$ with
$\wt(b_2)=21$. But then $\wt(\pi^{-1}((a_1|b_1)+(010\ldots
0|b_2)))\leq 6+6 <24$ which contradicts the minimum distance of
$C$. If $d(\mathcal{A}_1^\perp)=2$, then (up to equivalence) there
is a codeword $(0110\ldots 0| b_2) \in \pi(F_\sigma(C))$ with
$\wt(b_2)=18$ or $22$. Since the vectors $b_1$ and $b_2$ are
orthogonal to each other, the weight of their sum $b_1+b_2$ is 1,
3, 5 or 7. But then  we obtain
$$\wt(\pi^{-1}((a_1|b_1)+(0110\ldots 0|b_2)))=
9+\wt(b_1+b_2)\leq 16 < 24,$$ a contradiction.

Next we consider the case $d(\mathcal{A}^\perp)=2$. Let
$$W_{\mathcal{A}}(y)=1+A_8y^8+A_{12}y^{12}+A_{16}y^{16}
+A_{20}y^{20}+A_{24}y^{24}+A_{28}y^{28}+A_{32}y^{32}$$ denote the weight enumerator of ${\cal A}$ and let
$$W_{\mathcal{A}^\perp}(y)=1+B_{2}y^2+B_3y^3+\ldots$$ be the weight enumerator of its dual code.
Since $k_2=0$, the code $\mathcal{A}$ does not contain the all one vector.
Hence $A_{32}=0$.

Using the power moments $$\sum_{j=d}^n A_j=2^k-1, \ \ \
\sum_{j=d}^n jA_j=2^{k-1}n, \ \ \ \sum_{j=d}^n j^2
A_j=2^{k-2}n(n+1)+2^{k-1}B_2$$ for a linear binary $[n,k,d]$ code
with $B_1=0$ (see for example \cite{pless-libro}, section 7.3) we obtain
$$A_{20}= 31 - 10A_8 - 6A_{12}- 3A_{16}+ \frac{1}{4}B_2,$$
$$A_{24}= -21 + 15A_8 + 8 A_{12} + 3 A_{16} - \frac{1}{4}B_2,$$
$$A_{28}= 5 - 6A_8 - 3 A_{12} - A_{16} + \frac{1}{4}B_2.$$
Therefore,
$ A_{24}+3A_{28}=\frac{1}{2}B_2-3A_8-A_{12}-6$ and
 $B_2$ is a multiple of $4$.
 Since $A_{j}$  are nonnegative integers, we get $B_2\ge 12$. Now we consider $a_1$,
$a_2 \in\mathcal{A}^\perp$ with $a_1\neq a_2$ and
$\wt(a_1)=\wt(a_2)=2$. Thus there are vectors $(a_i|b_i)
\in\pi(F_\sigma(C))$ with $\wt(b_i)=18$ or $22$ for $i=1,2$. In
particular,
$\wt(b_1+b_2)\leq 12$ since $b_1,b_2 \in \mathbb{F}_2^{24}$. It follows that
$$\wt(\pi^{-1}(a_1+a_2|b_1+b_2))\leq 12+ \wt(b_1+b_2)\leq 24.$$
Since the minimum distance of $C$ is 24, we get
$\wt(\pi^{-1}(a_1+a_2|b_1+b_2))=24$. Moreover $\wt(a_1+a_2)=4$,
$\wt(b_1+b_2)=12$ and $\wt(b_1)=\wt(b_2)=18$. Using this, we easily
see that $B_2\leq 4$, which contradicts the  above
inequality $B_2\geq 12$.\\[-2ex]

Finally we deal with the
case $k_2=1$.
 Now $k_1=5$ and $\mathcal{A}$ is a
doubly-even $[32, 5, d']$ code with $d'\geq 8$. By \cite{table-mark}, the dual distance satisfies
$d(\mathcal{A}^\perp)\leq2$. Thus there exist a vector $(a|b)
\in\pi(F_\sigma(C))$ with $\wt(a)\leq2$ and $\wt(b)\geq 18$. Since
$k_2=1$ we have $v=(0, \ldots, 0| \mathbf{1})\in
\pi(F_\sigma(C))$. But then $\wt(\pi^{-1}(a|b+\mathbf{1}))\leq 6+
6 < 24$, the final contradiction.
\end{proof}

\begin{lemma} \label{3-34-cycles} $C$ does not have an automorphism of type $3$-$(34;18)$.
\end{lemma}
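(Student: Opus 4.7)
The plan is to follow the template of the proof of Lemma~\ref{3-32-cycles}. First, the Balance Principle gives $k_1 = k_2 + 8$. Since $f = 18 < 24 = d$, any non-zero codeword of $C$ supported only on the $18$ fixed points would have weight at most $18$, impossible; hence $k_2 = 0$ and $k_1 = 8$. Thus $\mathcal{A} = C_{\pi_1}$ is a $[34, 8, \ge 8]$ doubly-even self-orthogonal code (all weights divisible by~$4$, since $C$ is Type~II and $\pi^{-1}(a|0)\in C$ has weight $3\wt(a)$), and $\mathcal{A}^\perp = \mathcal{A}_D$ has dimension $26$. Because $k_2 = 0$, for every $a \in \mathcal{A}^\perp$ there is a unique $\varphi(a) \in \mathbb{F}_2^{18}$ with $(a|\varphi(a)) \in \pi(F_\sigma(C))$, and $\varphi$ is a linear surjection with kernel $\mathcal{A}$. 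For nonzero $a$ one has $3\wt(a) + \wt(\varphi(a)) \ge 24$ and $\wt(\varphi(a)) \equiv \wt(a)\pmod 4$.

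Two immediate consequences follow. The all-ones codeword $\mathbf{1}_{120}\in C$ yields $\mathbf{1}_{34}\in\mathcal{A}^\perp$ with $\varphi(\mathbf{1}_{34}) = \mathbf{1}_{18}$; and a weight-$1$ vector in $\mathcal{A}^\perp$ would require $\wt(\varphi)\ge 21>18$, so $d(\mathcal{A}^\perp)\ge 2$. Moreover, every weight-$2$ vector $a\in\mathcal{A}^\perp$ is forced to satisfy $\varphi(a)=\mathbf{1}_{18}$, so $\mathbf{1}_{34}+a\in\mathcal{A}$ has weight $32$; conversely each weight-$32$ element of $\mathcal{A}$ produces a weight-$2$ element of $\mathcal{A}^\perp$ in this way. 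Two distinct weight-$2$ vectors would sum to an element of $\mathcal{A}$ of weight $\le 4 < 8$, so $B_2 = A_{32} \le 1$. I would then appeal to the online table \cite{table-mark} (as in Lemma~\ref{3-32-cycles}) to bound $d(\mathcal{A}^\perp)\le 2$.

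The two resulting cases are handled in parallel with Lemma~\ref{3-32-cycles}. If $d(\mathcal{A}^\perp)=1$, Lemma~\ref{dual-dis-1} writes $\mathcal{A}=(0|\mathcal{A}_1)$ with $\mathcal{A}_1$ a $[33,8,\ge 8]$ doubly-even code; bounding $d(\mathcal{A}_1^\perp)\le 2$ and constructing pairs $(a_i|b_i)\in\pi(F_\sigma(C))$ whose sum has weight strictly less than $24$ produces a contradiction in each sub-case. If $d(\mathcal{A}^\perp)=2$ (equivalently $A_{32}=1$), use the Pless power moments applied to $W_{\mathcal{A}}$ together with the MacWilliams formulas for $B_2,B_3,\ldots$ in terms of $A_8,A_{12},\ldots,A_{32}$, and combine $B_r\ge 0$ with the constraint $A_{32}=1$ to rule out every admissible integer weight enumerator.

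The main obstacle is this final arithmetic step. With seven unknowns in $W_{\mathcal{A}}$ rather than the four of Lemma~\ref{3-32-cycles}, the MacWilliams/Pless system has substantially more freedom, and a contradiction will not follow from the first few moments alone. I expect to close the case by supplementing the arithmetic inequalities with structural observations on $\varphi$---most usefully that any two distinct weight-$3$ vectors of $\mathcal{A}^\perp$ must have disjoint supports (so $B_3\le \lfloor 34/3\rfloor =11$), together with the parity constraint $\wt(\varphi(a))\equiv \wt(a)\pmod 4$, which sharply restricts the weight distributions across the cosets of $\mathcal{A}$ in $\mathcal{A}^\perp$ and, combined with $B_r\ge 0$ for small $r$, ultimately exhausts the admissible weight enumerators.
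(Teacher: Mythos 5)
There is a genuine gap, and it sits exactly where the real work of this lemma lies. Your appeal to the tables to conclude $d(\mathcal{A}^\perp)\le 2$ is false: here $\mathcal{A}^\perp$ is a $[34,26]$ code (redundancy $8$), not a $[32,28]$ code (redundancy $4$) as in Lemma~\ref{3-32-cycles}, and binary $[34,26,4]$ codes exist (shorten the extended Hamming code $[128,120,4]$), so the tables only give $d(\mathcal{A}^\perp)\le 4$. Consequently the case $d(\mathcal{A}^\perp)\ge 3$ --- which is the main case, and the one the paper spends most of its effort on --- is left completely unaddressed by your case split. Your preliminary observations (the mod-$4$ congruence $\wt(\varphi(a))\equiv\wt(a)\pmod 4$, the forcing $\varphi(a)=\mathbf{1}_{18}$ for $\wt(a)=2$, and $B_2\le 1$) are all correct and close in spirit to the paper's treatment of the $d^\perp=2$ subcase; the paper pushes the parity idea slightly further to show that a weight-$2$ vector in $\mathcal{A}^\perp$ forces \emph{all} of $\mathcal{A}^\perp$ to be even, hence $\mathbf{1}_{34}\in\mathcal{A}$, contradicting double-evenness, so in fact $d(\mathcal{A}^\perp)\ge 3$ always.

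For the remaining case the paper does not work with $W_{\mathcal{A}}$ and the Pless power moments at all; it computes the full split weight distribution $A_{(x,y)}$ of the self-dual $[52,26]$ code $\pi(F_\sigma(C))$ via the generalized MacWilliams identities, subject to the vanishing constraints ($x+y$ even, $3x+y\equiv 0\bmod 4$, $3x+y\ge 24$, $A_{(1,y)}=A_{(2,y)}=0$, symmetry), and extracts the relation $3A_{(31,3)}+2A_{(20,0)}+3A_{(9,1)}=-26A_{(8,0)}$, forcing $A_{(8,0)}=0$ and then $A_{(9,1)}=34-4A_{(12,0)}=0$, which is impossible. Note that coefficients such as $A_{(9,1)}$ are not recoverable from $W_{\mathcal{A}}$ and $W_{\mathcal{A}^\perp}$ alone (e.g.\ $B_9$ of $\mathcal{A}^\perp$ lumps together $A_{(9,1)},A_{(9,5)},\dots$), so your own suspicion that the one-dimensional power-moment system ``will not follow from the first few moments alone'' is well founded; the split (two-variable) identities are the missing tool, and without them your final step remains a plan rather than a proof.
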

\begin{proof} Let $\sigma$ be an automorphism of $C$ of type $3$-$(34;18)$.
Then $\pi(F_\sigma(C))$ is a self-dual $[52,26,\ge 8]$ code and we
consider a generator matrix for $\pi(F_\sigma(C))$ of the form
(\ref{form-matrix}). Since $f=18<24$ we have $k_2=0$,
hence
$$\gen(\pi(F_\sigma(C))) =\left(\begin{array}{cc}
A & O\\
D & E
\end{array}\right). 
$$
The
balance principle (see Lemma \ref{pless}) yields $k_{1}=8$.

If $(a|b)$ is a nonzero codeword in $\pi(F_\sigma(C))$, where $a$
and $b$ are vectors of length 34 and 18,  then
$3\wt(a)+\wt(b)\ge 24$ and therefore $\wt(a)\ge 2$. Clearly, $\mathcal{A}$ is a doubly-even $[34, 8, d']$ code
with $d'\geq 8$ and dual distance $d'^\perp \geq 2$.

We consider first the
case $d'^\perp=2$. If $\wt(a)=2$ then $b$ is the
all one vector of length $18$. Suppose that
$(a'|b')\in\pi(F_\sigma(C))$ is a codeword where $\wt(a')=x$
and $\wt(b')=y$ are odd numbers. Since $3x+y\equiv 0\pmod 4$ we get
$y\equiv x\pmod 4$. Thus the weight of the codeword
$\pi^{-1}(a+a'|b+b')\in C$ is $$3x+6+18-y=3x-y+24\equiv 3x-y\equiv
2x\equiv 2\pmod 4$$ or $$3x-6+18-y=3x-y+12\equiv 3x-y\equiv 2x\equiv
2\pmod 4.$$  Both cases are not possible for a doubly-even code. This shows
that in case  $d'^\perp=2$ the code $\mathcal{A}^\perp$ contains only even
weight vectors .  Hence $\mathbf{1}\in\mathcal{A}$, a
contradiction, since $C$ is doubly-even.

Thus we may assume that $d'^\perp \geq 3$.
In order to get a final contradiction we
calculate the split weight distribution
$$A_{(x,y)} =  |\{(u,w)\in \pi(F_{\sigma}(C)) \mid \wt(u)=x \; \textrm{and} \;\wt(w)=y \}|   \qquad   (0\le x\le 34, \,0\le y\le 18)   $$
of $\pi(F_\sigma(C))$.
To do so we use
the generalized MacWilliams identities
$$A_{(r, i)}=\frac{1}{2^{26}}\sum_{v=0}^{18} \sum_{w=0}^{34}A_{(w,v)}\mathcal{K}_{r}(w,34)\mathcal{K}_{i}(v,18), \ 0 \leq i\leq 18, \ 0\leq r \leq 34$$
(see \cite{Sim} and
\cite[Theorem 13]{ecuacion-split})
with the following restrictions:
 \begin{itemize}
 \item $A_{(x,y)}=0$ if $x+y$ is odd,
 \item $A_{(x,y)}=0$ if $3x+y\not\equiv 0\mod 4$,
 \item $A_{(x,y)}=0$ if $0<x+y<8$ or $0<3x+y<24$,
 \item $A_{(1,y)}=0$ and $A_{(2,y)}=0$ for $y=0,1,\dots,18$,
 \item $A_{(0,0)}=1$, $A_{(x,y)}=A_{(34-x,18-y)}$.
 \end{itemize}
By multiple substitution we find
$$A_{(9,1)}=34-22A_{(8,0)}-4A_{(12,0)},$$
$$A_{(31,3)}=20A_{(8,0)}+8A_{(12,0)}+2A_{(16,0)}-476, $$
$$A_{(20,0)}=663-10A_{(8,0)}-6A_{(12,0)}-3A_{(16,0)}.$$ Thus we obtain
$3A_{(31,3)}+2A_{(20,0)}+3A_{(9,1)}=-26A_{(8,0)}$ which forces
$A_{(8,0)}=0$ since $A_{(x,y)} \geq 0$. Thus $0=A_{(9,1)}=34-4A_{(12,0)}$ which is not possible.
\end{proof}

 \begin{lemma} \label{3-36-cycles} $C$ does not have an automorphism of type $3$-$(36;12)$.
\end{lemma}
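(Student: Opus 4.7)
Let $\sigma$ be a putative automorphism of $C$ of type $3$-$(36;12)$, and fix a generator matrix of $\pi(F_{\sigma}(C))$ in the form (\ref{form-matrix}); note that $\pi(F_{\sigma}(C))$ is a self-dual $[48,24,\geq 8]$ code. Any nonzero codeword of the form $(0|b)$ would have preimage of weight $\wt(b)\leq f=12 <24$, violating the minimum distance of $C$. Hence $C_{\pi_{2}}=0$ and $k_{2}=0$, so that by the Balance Principle $k_{1}=12$. Therefore $\mathcal{A}$ is a doubly-even $[36,12,d'\geq 8]$ code. Since by Lemma \ref{pless}(c) we have $\mathcal{A}^{\perp}=\mathcal{A}_{D}$, every $a\in\mathcal{A}^{\perp}$ occurs as the first part of some $(a|b)\in\pi(F_{\sigma}(C))$; the inequality $3\wt(a)+\wt(b)\geq 24$ together with $\wt(b)\leq 12$ immediately rules out $\wt(a)\in\{1,2,3\}$, whence $d(\mathcal{A}^{\perp})\geq 4$.

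Next I would pin down the split weight distribution $A_{(x,y)}$ of $\pi(F_{\sigma}(C))$, $0\leq x\leq 36$, $0\leq y\leq 12$. The doubly-even condition on $C$ forces $A_{(x,y)}=0$ whenever $3x+y\not\equiv 0\pmod 4$, and the minimum distance forces $A_{(x,y)}=0$ whenever $0<3x+y<24$. Moreover $A_{(0,y)}=0$ for $y>0$ since $k_{2}=0$, while $\mathbf{1}_{120}\in C$ is fixed by $\sigma$ and so $\mathbf{1}_{48}\in\pi(F_{\sigma}(C))$, giving the symmetry $A_{(x,y)}=A_{(36-x,\,12-y)}$. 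Listing the admissible $(x,y)$ under these conditions leaves only a small number of parameters free, essentially $A_{(8,0)},A_{(12,0)},A_{(16,0)}$ together with a handful of $A_{(x,12)}$.

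Finally I would apply the generalised MacWilliams identities for split weight distributions as in the proof of Lemma \ref{3-34-cycles},
\[
A_{(r,i)}=\frac{1}{2^{24}}\sum_{v=0}^{12}\sum_{w=0}^{36}A_{(w,v)}\,\mathcal{K}_{r}(w,36)\,\mathcal{K}_{i}(v,12),
\]
for several well-chosen pairs $(r,i)$ and form a non-negative rational combination of the resulting equations that eliminates all free parameters and leaves a negative constant on the right-hand side. Such a relation contradicts $A_{(x,y)}\geq 0$ and completes the proof. The main obstacle here is purely combinatorial bookkeeping: selecting a tractable list of identities from which the positive linear combination producing the contradiction can be read off, exactly as the identities for $A_{(9,1)},A_{(31,3)},A_{(20,0)}$ did in the $3$-$(34;18)$ case. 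No new conceptual ingredient is expected.
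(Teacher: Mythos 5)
Your proposal follows the paper's proof essentially verbatim: the same reduction to $k_2=0$ and $k_1=12$, the same identification of $\mathcal{A}$ as a doubly-even $[36,12,d'\geq 8]$ code with $d(\mathcal{A}^\perp)\geq 4$, and the same final step via the split weight distribution of $\pi(F_\sigma(C))$ and the generalised MacWilliams identities. The concrete realization you leave unexecuted is, in the paper, the relation $A_{(28,0)}+4A_{(32,0)}=-A_{(8,0)}$, which forces $A_{(8,0)}=A_{(28,0)}=A_{(32,0)}=0$ and hence $A_{(16,0)}=1773$, after which $A_{(30,2)}=18A_{(16,0)}-32076=-162<0$ gives the contradiction.
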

\begin{proof} Let $\sigma$ be an automorphism of $C$ of type $3$-$(36;12)$.
Thus $\pi(F_{\sigma}(C))$ is a self-dual $[48,24,\ge 8]$ code.

We take again a generator matrix for $\pi(F_\sigma(C))$ in the form
(\ref{form-matrix}). Since $f<24$, we have $k_2=0$  and by the Balance
Principle (see Lemma \ref{pless}), we get $k_{1}=12$. Hence
$\pi(F_\sigma(C))$ has a generator matrix of the form
$$\left(%
\begin{array}{cc}
  A & O \\
  D & E \\
\end{array}%
\right).
$$
 Note that $\mathcal{A}$ is a doubly-even $[36, 12, d']$ code with $d'\geq 8$. If $a\in \mathcal{A}^\perp,$ then there exists a vector
 $(a|b)\in \pi(F_\sigma(C))$ with $3\wt(a)+\wt(b)\geq 24$ and $\wt(b)\leq 12$. Thus $\wt(a)\geq 4$ and the dual distance $d'$ of ${\cal A}$ satisfies
  $d'^\perp \geq 4$.
 A calculation of the coefficients $A_{(x,y)}$  $(x=0,1,\dots,36,\, y=0,1,\dots,12) $ of the split weight enumerator of
 $\pi(F_\sigma(C))$  yields $$A_{(28,0)}=7092+39A_{(8,0)}-4A_{(16,0)}
 \ \  \mbox{and} \ \ A_{(32,0)}=A_{(16,0)}-10A_{(8,0)}-1773.$$
Thus $A_{(28,0)}+4A_{(32,0)}=-A_{(8,0)}$. This implies $A_{(8,0)}=0$, hence $A_{(28,0)}=A_{(32,0)}=0$ and
$A_{(16,0)}=1773$. But then
$$A_{(30,2)}=18A_{(16,0)}- 192A_{(8,0)}-32076=18A_{(16,0)}-32076=-162<0,$$
a contradiction.
\end{proof}

\begin{lemma} \label{3-38-cycles} $C$ does not have an automorphism of type $3$-$(38;6)$.
\end{lemma}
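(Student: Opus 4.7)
The plan is to follow the strategy of Lemmas~\ref{3-34-cycles} and~\ref{3-36-cycles}: set up the fixed code, translate all available constraints into the split weight enumerator of $\pi(F_\sigma(C))$, and extract a numerical impossibility from the generalized MacWilliams identities.

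First I would analyse the structure of $\pi(F_\sigma(C))$. Since $c+f=44$, it is a self-dual $[44,22,\ge 8]$ binary code. As $f=6<24$, any nonzero codeword of $\pi(F_\sigma(C))$ supported only on the last six coordinates would pull back to a codeword of $C$ of weight at most $6$, contradicting $d(C)=24$; hence $k_2=0$. The Balance Principle then yields $k_1=16$, so $\pi(F_\sigma(C))$ has a generator matrix of the form~(\ref{form-matrix}) with vanishing upper-right block, and $\mathcal{A}$ is a doubly-even $[38,16,d'\ge 8]$ code. For any nonzero $a\in\mathcal{A}^\perp$ there is a codeword $(a|b)\in\pi(F_\sigma(C))$ with $\wt(b)\le 6$, and $3\wt(a)+\wt(b)\ge 24$ forces $\wt(a)\ge 6$. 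Thus the dual distance of $\mathcal{A}$ satisfies $d'^\perp\ge 6$.

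Next I would compute the split weight distribution $A_{(x,y)}$ for $0\le x\le 38$, $0\le y\le 6$ from the generalized MacWilliams identities, exactly as in the two preceding lemmas. The admissible non-zero cells are heavily restricted: self-duality of $\pi(F_\sigma(C))$ combined with the doubly-even property of $C$ gives $x\equiv y\pmod{4}$; the minimum distance of $C$ forces $A_{(x,y)}=0$ whenever $0<3x+y<24$; the bound $d'^\perp\ge 6$ gives $A_{(x,y)}=0$ for $1\le x\le 5$; the vanishing $k_2=0$ gives $A_{(0,y)}=0$ for $y>0$; and since every binary self-dual code contains $\mathbf{1}$, one has $A_{(x,y)}=A_{(38-x,6-y)}$. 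Under these constraints the independent unknowns collapse to the cells with $y\in\{0,2\}$, and multiple substitutions should express the remaining coefficients as affine $\mathbb{Z}$-combinations of a small pool of free parameters, for instance $A_{(8,0)},A_{(12,0)},A_{(16,0)}$ and $A_{(20,0)}$.

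The main obstacle is to find a short non-negative integer combination of the resulting identities whose right-hand side is strictly negative on the non-negative orthant; in spirit this is the same trick that produces $3A_{(31,3)}+2A_{(20,0)}+3A_{(9,1)}=-26A_{(8,0)}$ in Lemma~\ref{3-34-cycles} and $A_{(28,0)}+4A_{(32,0)}=-A_{(8,0)}$ in Lemma~\ref{3-36-cycles}. Concretely I would first search for a relation that forces $A_{(8,0)}=0$ (and ideally $A_{(12,0)}=0$ as well), then substitute back and examine a boundary coefficient such as $A_{(x,2)}$ with $x$ close to $34$, hoping to obtain a negative integer as in the $A_{(30,2)}$ step of Lemma~\ref{3-36-cycles}. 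Since the rigidity here (with $d'^\perp\ge 6$ cutting out many extra cells compared to the previous lemma) is at least as strong as in the $(36;12)$ case, such a combination should exist; locating it precisely is routine but tedious linear algebra in the same spirit as before.
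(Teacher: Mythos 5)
Your setup is correct and matches the paper's: $k_2=0$, $k_1=16$ by the Balance Principle, $\mathcal{A}$ a doubly-even $[38,16,d'\ge 8]$ code, and $d(\mathcal{A}^\perp)\ge 6$ from $3\wt(a)+\wt(b)\ge 24$ with $\wt(b)\le 6$. But the decisive step of your argument is only a hope: you assert that some non-negative integer combination of the split-weight-enumerator identities must be negative on the non-negative orthant "since the rigidity here is at least as strong as in the $(36;12)$ case," and you never exhibit it. That is a genuine gap, and there is concrete reason to doubt the approach as stated: the paper itself does \emph{not} obtain the contradiction from the generalized MacWilliams identities for $\pi(F_\sigma(C))$ alone. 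Instead it first notes that the bound (\ref{cota-dual}) together with $3x+y\ge 24$, $x+y\le 8$ forces $d_\pi=8$ exactly, so that the Conway--Sloane classification of weight enumerators of self-dual $[44,22,8]$ codes applies, leaving two one-parameter families $W_1,W_2$ in $\beta$. That classification encodes Gleason/shadow information which is not a formal consequence of the (split) MacWilliams identities, so a purely linear search over your listed constraints may well have a nonempty feasible region.

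The paper then works with the ordinary MacWilliams transform of the $[38,16,8]$ code $\mathcal{A}$ (not the split enumerator of $\pi(F_\sigma(C))$), expressing $A_{12},\dots,A_{32}$ and $A_6^\perp,A_7^\perp,\dots$ in terms of $A_8$ alone; the inequality $A_7^\perp=480-8A_8\ge 0$ gives $A_8=44+4\beta\le 60$, hence $\beta\le 4$, which rules out $W_1$. It also needs the extra observation that $\mathcal{A}$ has no codeword of weight $36$ (via $\wt(\pi^{-1}(u+\mathbf{1}|\mathbf{1}))<24$). The contradiction finally comes from computing the single coefficient $A_{12}^\pi=A_{(12,0)}+A_{(10,2)}+A_{(8,4)}+A_{(6,6)}$ two ways: via $\mathcal{A}$, $\mathcal{A}^\perp$ and the symmetry $A_{(x,y)}=A_{(38-x,6-y)}$ one gets $9881-82\beta$, while $W_2$ gives $10241-20\beta$. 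To repair your proof you would either have to actually produce the positive combination you conjecture (and verify that the split identities plus your listed constraints really are infeasible), or import the external input the paper uses.
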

\begin{proof} Let $\sigma \in \Aut(C)$ be of type $3$-$(38;6)$. Now  $\pi(F_\sigma(C))$ is a
self-dual $[44,22,d_\pi]$ code. According to (\ref{cota-dual}) we
have $d_\pi \leq 8$. If
 $d_\pi = x +y,$ where $x$ is the number of 1's in the first $c$ coordinates  and $y$ is the number of 1's in the last $f$ coordinates
 of a minimal weight codeword in $\pi(F_\sigma(C))$, then $x+y \leq 8$ and $3x + y \geq 24$.
 This forces $ x \geq 8$, $y=0$ and $d_\pi=8$. Thus $\pi(F_\sigma(C))$ is a
self-dual $[44,22,8]$ code. According to \cite{conway-sloane}  there are two possible weight
enumerators for such a code, namely
$$W_1(y)=1+(44+4\beta)y^8+(976-8\beta)y^{10}+\ldots $$ where $10\leq\beta \leq122$ \; and
$$W_2(y)=1+(44+4\beta)y^8+(1232-8\beta)y^{10}+(10241-20\beta)y^{12}\ldots $$ where $0\leq\beta \leq154$.

Now we take a generator matrix for $\pi(F_\sigma(C))$ in the form of
(\ref{form-matrix}). Since $f<24,$ we have  $k_2=0$  and by the Balance Principle (see Lemma \ref{pless}), we get
$k_{1}=16$. Hence a generator matrix of $\pi(F_\sigma(C))$ is
 of the form
$$\left(%
\begin{array}{cc}
  A & O \\
  D & E \\
\end{array}%
\right).
$$
Observe that $\mathcal{A}$ is a $[38, 16, d']$ doubly-even code with
$d'\geq 8$. Since $d_\pi=8$ there is a vector $(u| w) \in
\pi(F_\sigma(C))$
with $\wt(u|w)=8$ and $3\wt(u)+\wt(w)\geq24$.
This implies $\wt(u)=8$ and $\wt(w)=0$, hence $d'=8$.

On the other hand, if $a\in \mathcal{A}^\perp$, then there exists a vector $(a|b)\in \pi(F_\sigma(C))$ with $3\wt(a)+\wt(b)\geq 24$ and $\wt(b)\leq 6$.
Hence $\wt(a)\geq 6$. Consequently $\mathcal{A}$ is a $[38, 16, 8]$ doubly-even code with dual distance $d'^\perp \geq 6$.
Furthermore, ${\cal A}$ does not contain a codeword of weight $36$ since for $(u|0)\in \pi(F_\sigma(C))$ with $\wt(u)=36$ we get $$\wt(\pi^{-1}(u+\mathbf{1}|\mathbf{1}))\leq 6+6< 24.$$

Now let $$W_{\mathcal{A}}(y)=1+A_8y^8+A_{12}y^{12}+\ldots+A_{32}y^{32}$$ and $$W_{\mathcal{A^\perp}}(y)=1+A_6^\perp y^6+A_7^\perp y^7+\ldots$$
denote the weight enumerators of ${\cal A}$ and ${\cal A^\perp}$.
Using the MacWilliams identity equations and Maple calculations we get
$$A_{12}=2808-6A_8, \ \ldots, \ A_{28}=632-6A_{8},\;
A_{32}=-27+A_{8}$$ and $$A_{6}^\perp=4A_{8}-87,\;
A_{7}^\perp=480-8A_{8},\; A_{8}^\perp=660+4A_{8},\;
A_{9}^\perp=1920,\; A_{10}^\perp=7952-24A_{8},\, \dots$$
To finish the proof we also need the weight enumerator $W_{\pi(F_\sigma(C))}(y)=\sum A_i^\pi y^i$ of $\pi(F_\sigma(C))$.
Note that $A_{8}=A_8^\pi$.

Since
$A_7^\perp=480-8A_{8} \geq 0$, we obtain $A_8=A_8^\pi=44+4\beta \leq60$.
Hence $0\leq \beta \leq 4$ which shows that $W_2$ is the weight enumerator of
$\pi(F_\sigma(C))$.

On the other hand,
$$ A_{12}^\pi=A_{(12,0)}+A_{(10,2)}+A_{(8,4)}+A_{(6,6)},$$
where\\

\noindent
$A_{(12,0)}=A_{12}=2808-6A_8=2544-26\beta,$\\
$A_{(10,2)}=(A_{(10,2)}+A_{(10,6)})-A_{(10,6)}=A_{10}^\perp-A_{(28,0)}=A_{10}^\perp-A_{28}=7320-18A_8=6528-72\beta,$\\
$A_{(8,4)}=(A_{(8,4)}+A_{(8,0)})-A_{(8,0)}=A_{8}^\perp-A_{8}=660+3A_8=792+12\beta$ and\\ $A_{(6,6)}=A_{(32,0)}=A_{32}=-27+A_8=17+4\beta$.\\

\noindent
It follows $A_{12}^\pi=9881-82\beta$. Computing this coefficient again via $W_2(y)$ we get $A_{12}^\pi=10241-20\beta$, a contradiction.
\end{proof}

So far we have shown that automorphisms of order $3$ act fixed point freely on the coordinates of
$C$ which completes part a) of the Theorem.

\section{Order of the automorphism group and automorphisms of composite order}

 In this section we prove part b) of the Theorem.

\begin{prop} 
 \label{cuadrado-divisor}
Let $C$ be a binary code of length $n$. Suppose that for every
automorphism of $C$ of prime order $p$ the number of $p$-cycles is not divisible by $p$ and
the number $f$ of fixed points satisfies $f < p$. Then
$ p^2 \nmid |\Aut(C)|$.
\end{prop}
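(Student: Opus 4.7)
The plan is to assume for contradiction that $p^2 \mid |\Aut(C)|$ and produce a Sylow $p$-subgroup $P \le \Aut(C)$ of order $p^2$. Since every group of order $p^2$ is abelian, $P$ is either cyclic of order $p^2$ or elementary abelian, isomorphic to $\Z_p\times\Z_p$, and I would handle the two cases separately. In both cases I will use the fact that the action of $\Aut(C)$ on the $n$ coordinates is faithful, so $P$ acts nontrivially.

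In the cyclic case, let $\tau\in\Aut(C)$ be a generator of $P$. Since $\tau$ has order $p^2$, its cycle lengths on coordinates are divisors of $p^2$, so writing the cycle decomposition with $a$ cycles of length $p^2$, $b$ cycles of length $p$ and $e$ fixed points, we must have $a\ge 1$ (otherwise $\tau^p=1$). Then $\tau^p$ is an automorphism of prime order $p$ whose $p$-cycle count equals exactly $ap$ (each $p^2$-cycle of $\tau$ breaks into $p$ cycles of length $p$ under $\tau^p$, while each $p$-cycle of $\tau$ collapses to $p$ fixed points). But $p \mid ap$, contradicting the hypothesis that the number of $p$-cycles of any prime-order $p$ automorphism is not divisible by $p$.

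In the elementary abelian case, I would analyze the orbits of $P$ on the $n$ coordinates. Each orbit has length $1$, $p$, or $p^2$; write $\alpha$, $\beta$, $\gamma$ for the respective counts, and for each of the $p+1$ subgroups $H$ of order $p$ in $P$ let $\beta_H$ denote the number of length-$p$ orbits with stabilizer $H$, so $\sum_H \beta_H=\beta$. For any $\sigma\in P$ of order $p$, a direct inspection of how $\langle\sigma\rangle$ acts on each type of $P$-orbit shows that $\sigma$ has exactly $\alpha p + (\beta - \beta_{\langle\sigma\rangle})$ cycles of length $p$ and exactly $p\beta_{\langle\sigma\rangle}+\gamma$ fixed points. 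The fixed-point hypothesis $f<p$ forces $p\beta_{\langle\sigma\rangle}+\gamma<p$, hence $\beta_{\langle\sigma\rangle}=0$. Letting $\sigma$ range so that $\langle\sigma\rangle$ runs over all $p+1$ subgroups of order $p$ in $P$ then yields $\beta_H=0$ for every $H$, so $\beta=0$. With $\beta=0$ the $p$-cycle count of any $\sigma\in P$ of order $p$ becomes $\alpha p$, which is divisible by $p$, contradicting the hypothesis unless $\alpha=0$. But $\alpha=\beta=0$ means $P$ fixes every coordinate, contradicting faithfulness and $|P|=p^2$.

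The main step that needs care is the elementary abelian case, specifically the bookkeeping of how an individual $\sigma \in P$ decomposes the length-$p$ $P$-orbits according to whether the stabilizer coincides with $\langle \sigma\rangle$ or not; the cyclic case is essentially immediate once one observes that the number of $p$-cycles of $\tau^p$ is forced to be a multiple of $p$. I expect no further obstacles, since after these structural observations both contradictions fall out of the two hypotheses (on $p$-cycle count and on fixed point count) applied in tandem.
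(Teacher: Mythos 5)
Your argument is correct. The cyclic case is essentially identical to the paper's: both observe that a $p^2$-cycle of an order-$p^2$ element forces $\sigma^p$ to have a number of $p$-cycles divisible by $p$. In the elementary abelian case, however, you take a genuinely different route. The paper writes $N=\langle\sigma,\theta\rangle$ and lets $\sigma$ permute the $p$-cycles of $\theta$: since their number is not divisible by $p$ (first hypothesis), $\sigma$ stabilizes one such cycle $\Omega$ setwise, hence acts on $\Omega$ as a power of $\theta$ (centralizer of a $p$-cycle in $\Sym(\Omega)$ is the cycle itself), and then $\theta\sigma^{-k}$ is a nonidentity element of order $p$ fixing the $p$ points of $\Omega$, violating the fixed-point hypothesis. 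You instead classify all orbits of the full subgroup $P$ by length and stabilizer, use the fixed-point bound \emph{first} to kill every $\beta_H$, and only then invoke the non-divisibility hypothesis on the resulting $p$-cycle count $\alpha p$ (which, note, is divisible by $p$ even when $\alpha=0$, so your separate treatment of $\alpha=0$ via faithfulness is not strictly needed). The paper's argument is shorter and produces one explicit offending element; yours is more systematic and makes the full orbit structure of $P$ transparent, at the cost of more bookkeeping. One cosmetic point: you introduce $\alpha,\beta,\gamma$ as the counts of orbits of length $1$, $p$, $p^2$ ``respectively,'' but your formulas use $\alpha$ for the $p^2$-orbits and $\gamma$ for the fixed points; the computation is internally consistent, but the labels should be swapped or the sentence reworded.
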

\begin{proof}
Suppose that $p^2 \mid |\Aut(C)|$. Thus, by Sylow's Theorem,
there exists a subgroup $ N \leq \Aut(C)$ with $|N|=p^2$, which must be abelian.
If there is an automorphism, say $\sigma$, of order $p^2$, then the number of $p$-cycles of $\sigma^p$ is divisible
by $p$, a contradiction.
 Thus we
may assume that all non-trivial elements in $N$ have order $p$. In
particular, $N = \langle \sigma, \theta \rangle$. Since $\sigma$ and
$\theta$ commute $\sigma$ acts on the orbits of size $p$ of
$\theta$. By assumption, the number of such orbits is
not divisible by $p$. Thus $\sigma$ fixes the elements of at least
one orbit of $\theta$, say $\Omega$. It follows that $\theta =
\sigma^k$ on $\Omega$ for some $k \in \N$. Thus $\theta
\sigma^{-k}$, which is not the identity on the $n$ coordinates, has at least $p$ fixed
points, a contradiction.

\end{proof}

Applying this in the particular situation of a binary self-dual
extremal code of length $120$ we get

\begin{prop} Let $ C $ be a binary self-dual  code with parameters $ [120, 60, 24] $. Then $|\Aut(C)|$ divides $2^a\cdot3 \cdot5\cdot7\cdot19\cdot23\cdot29$, where $a \in \mathbb{N}_0$.
\end{prop}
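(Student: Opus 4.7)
The plan is to combine Proposition \ref{cuadrado-divisor} with the cycle-structure data collected in (\ref{table}) and the fixed-point-free conclusion from part a) of the Theorem proved in the previous section. Table (\ref{table}) already restricts the primes that can divide $|\Aut(C)|$ to $\{2,3,5,7,19,23,29\}$, so the task reduces to proving $p^2 \nmid |\Aut(C)|$ for each odd prime $p$ in that set. Proposition \ref{cuadrado-divisor} is tailor-made for this, provided its two hypotheses—$p \nmid c$ for the number $c$ of $p$-cycles, and $f < p$ for the number of fixed points—are verified in each case.

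Concretely, for each odd prime $p \in \{3,5,7,19,23,29\}$ I would read off the admissible pair $(c,f)$ from (\ref{table}) and check both hypotheses. For $p=5,7,19,23,29$ the entries $(24;0)$, $(17;1)$, $(6;6)$, $(5;5)$, $(4;4)$ respectively satisfy $p \nmid c$ and $f < p$ by inspection. The case $p=3$ is the only one that looks problematic from (\ref{table}) alone: the pairs $(32;24)$, $(34;18)$, $(36;12)$, $(38;6)$ all have $f \geq 3$, which would violate the hypothesis. This is precisely where part a) of the Theorem is needed: it rules out every pair except $(40;0)$, for which $3 \nmid 40$ and $0 < 3$, so the hypotheses are met.

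Once the hypotheses are in place for every odd prime that can divide $|\Aut(C)|$, Proposition \ref{cuadrado-divisor} yields $p^2 \nmid |\Aut(C)|$ for each such $p$. Combining this with the list of admissible primes from (\ref{table}) forces
\[
|\Aut(C)| \;\Big|\; 2^a \cdot 3 \cdot 5 \cdot 7 \cdot 19 \cdot 23 \cdot 29
\]
for some $a \in \mathbb{N}_0$.

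There is no substantial obstacle to overcome at this stage: the technical work has already been carried out in Section 3 (eliminating automorphisms of order $3$ with nonzero fixed points, so that the hypothesis $f<p$ can be enforced for $p=3$) and in the abstract argument underlying Proposition \ref{cuadrado-divisor} (a standard Sylow-and-commuting-action trick). The present proposition is the bookkeeping step that assembles those two ingredients.
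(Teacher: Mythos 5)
Your proposal is correct and follows exactly the paper's own argument: restrict to the cycle structures $(40;0)$, $(24;0)$, $(17;1)$, $(6;6)$, $(5;5)$, $(4;4)$ using table (\ref{table}) together with part a) of the Theorem, verify the hypotheses of Proposition \ref{cuadrado-divisor} for each odd prime, and conclude $p^2 \nmid |\Aut(C)|$. The only difference is that you spell out the case-by-case verification which the paper leaves implicit.
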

\begin{proof} Suppose that $p \mid |\Aut(C)|$, where $p\geq3$ is a prime. Then, according to  (\ref{table}) and part a) of the Theorem,  we have $(c,f)=(40, 0),(24, 0), (17, 1),
(6,6), (5,5), (4,4)$. Thus Proposition
\ref{cuadrado-divisor} implies $ p^2 \nmid |\Aut(C)|$.
\end{proof}

Let $\sigma$ be an automorphism of $C$ of order $p\cdot r$ where
$p,r$ are primes. We say that $\sigma$ is of type  $p\cdot
r$-$(s_{1}, s_{2}, s_{3}; f)$ if $\sigma$ has $s_1$ $p$-cycles,
$s_2$ $r$-cycles, $s_3$ $pr$-cycles and $f$ fixed points. In
particular, $n = s_1p +s_2r +s_3pr + f$. In the special case $p=r$
we write $p^2$-$(s_1,s_2;f)$ where $n=s_1p +s_2p^2+f$.

\begin{lemma} {\rm \cite{Radinka}} \label{compuesto} Let $C$ be a self-dual code and let $p,r$ be different odd primes.
\begin{itemize}
\item[a)]
 If $C$ has an automorphism of type $p\cdot r$-$(s_{1}, s_{2}, s_{3}; f)$, then the automorphism  $\sigma^r$ is of type $p$-$(s_{1}+s_{3}r; s_{2}r+f)$ and $\sigma^p$ is of
 type $r$-$(s_{2}+s_{3}p; s_{1}p+f)$.
\item[b)] If $C$ has an automorphism  of type $p^2$-$(s_{1},s_{2}; f)$,  then $\sigma^p$ is of type $p$-$(s_{2}p; s_{1}p+f)$.
\end{itemize}
\end{lemma}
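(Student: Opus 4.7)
The plan is to derive Lemma~\ref{compuesto} directly from the elementary fact that when a permutation $\tau$ acts on a finite set as a single $m$-cycle, the power $\tau^k$ consists of exactly $\gcd(k,m)$ cycles, each of length $m/\gcd(k,m)$. This is a standard consequence of the orbit structure of $\langle \tau^k\rangle$ inside the cyclic group $\langle \tau\rangle$, so no heavy machinery is required; in particular, the self-duality hypothesis on $C$ is never invoked --- the lemma is really a statement about permutations and their powers, placed here only because it will be applied to automorphism groups of codes.

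For part~a), I would decompose the $n$ coordinates into the $\sigma$-stable pieces corresponding to the four cycle types: $s_1$ $p$-cycles, $s_2$ $r$-cycles, $s_3$ $pr$-cycles and $f$ fixed points. Since $p$ and $r$ are distinct odd primes, $\gcd(p,r)=1$. Applying the fact above to $\sigma^r$ on each piece: each $p$-cycle stays a $p$-cycle (because $\gcd(r,p)=1$), contributing $s_1$; each $r$-cycle collapses to $r$ fixed points (because $r\mid r$), contributing $s_2 r$ fixed points; each $pr$-cycle splits into $\gcd(r,pr)=r$ cycles of length $pr/r=p$, contributing $s_3 r$ additional $p$-cycles; and the $f$ fixed points of $\sigma$ remain fixed. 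Summing yields the type $p$-$(s_1+s_3 r;\, s_2 r+f)$. The companion statement for $\sigma^p$ is symmetric: swap the roles of $p$ and $r$ and use $\gcd(p,pr)=p$ together with $\gcd(p,r)=1$.

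For part~b), the orbits of a permutation of order $p^2$ have lengths in $\{1,p,p^2\}$, so $\sigma$ consists of $s_1$ $p$-cycles, $s_2$ $p^2$-cycles and $f$ fixed points. Applying the same fact to $\sigma^p$: each $p$-cycle collapses to $p$ fixed points (since $p\mid p$), contributing $s_1 p$ fixed points; each $p^2$-cycle splits into $\gcd(p,p^2)=p$ cycles of length $p^2/p=p$, contributing $s_2 p$ additional $p$-cycles; and the $f$ fixed points remain. This gives precisely the claimed type $p$-$(s_2 p;\, s_1 p+f)$.

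Since the entire argument is just bookkeeping of cycle contributions under coprime or divisor relations, I do not expect any genuine obstacle; the only care needed is to compute the relevant $\gcd$ correctly whenever moving between cycle lengths, and to check that the sum of the cycle lengths after taking the power still accounts for all $n=s_1p+s_2r+s_3pr+f$ coordinates (respectively $n=s_1p+s_2p^2+f$), which serves as a useful arithmetic sanity check on each of the two formulas.
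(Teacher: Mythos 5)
Your argument is correct: the lemma is pure permutation bookkeeping, and your use of the fact that a power $\tau^k$ of an $m$-cycle splits into $\gcd(k,m)$ cycles of length $m/\gcd(k,m)$ gives exactly the claimed cycle types in both parts. The paper itself gives no proof --- it quotes the result from \cite{Radinka} --- so there is nothing to compare against, but your derivation is the standard one and is complete; you are also right that self-duality plays no role here.
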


Since by  Proposition \ref{cuadrado-divisor}) there are no automorphisms of order $p^2$ for $p$ an odd prime, the following  completes the proof of the Theorem.

\begin{lemma} \label{multiplo} If $\sigma$ is an automorphism of a self-dual $[120,60,24]$ code $C$ of order $p\cdot r$ where $p$ and $r$ are different odd primes, then the order of $\sigma$ is $3 \cdot 5$, $3 \cdot 19$ or $5 \cdot 23$  and its cycle structure is given by $3 \cdot 5$-$(0,0,8;0)$, $3\cdot 19$-$(2,0,2;0)$ or $5 \cdot 23$-$(1,0,1;0)$.
\end{lemma}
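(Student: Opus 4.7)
The plan is to proceed by a finite case analysis. By part b) of the Theorem the only odd primes dividing $|\Aut(C)|$ lie in $\{3,5,7,19,23,29\}$, and combining part a) of the Theorem with the table (\ref{table}) the possible cycle types for an automorphism of prime odd order are precisely
$3$-$(40;0)$, $5$-$(24;0)$, $7$-$(17;1)$, $19$-$(6;6)$, $23$-$(5;5)$, $29$-$(4;4)$.
For $\sigma$ of order $p\cdot r$ with $p<r$ distinct odd primes from this list I would run through the fifteen unordered pairs $\{p,r\}$.

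For each pair, write $\sigma$ as $p\cdot r$-$(s_1,s_2,s_3;f)$ and apply Lemma \ref{compuesto}(a): $\sigma^r$ is of type $p$-$(s_1+rs_3;\,rs_2+f)$ and $\sigma^p$ is of type $r$-$(s_2+ps_3;\,ps_1+f)$. Matching against the two admissible prime types gives the four Diophantine equations
\begin{align*}
s_1+rs_3 &= c_p, & rs_2+f &= f_p, \\
s_2+ps_3 &= c_r, & ps_1+f &= f_r,
\end{align*}
where $(c_q;f_q)$ denotes the prime type for order $q$. Since in every pair $r>\max(f_p,f_r)$ and $p>\max(f_p,f_r)/p$, the right-hand equations force $s_2=0$, $f=f_p$, and then $s_1=(f_r-f_p)/p$. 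The third equation then requires $p\mid c_r$, determining $s_3=c_r/p$, after which the first equation becomes the consistency check $s_1+rc_r/p=c_p$.

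The remaining work is bookkeeping. For $p=3$ we have $s_2=f=0$, and $3\mid c_r$ together with $3\mid f_r$ picks out exactly $r=5$ (yielding $(0,0,8;0)$) and $r=19$ (yielding $(2,0,2;0)$); the values $r\in\{7,23,29\}$ are eliminated because $3\nmid c_r$. For $p=5$ the divisibility conditions $5\mid c_r$ and $5\mid f_r$ select only $r=23$, producing $(1,0,1;0)$. For $p\in\{7,19,23\}$ one checks directly that either $p\nmid c_r$ or $p\nmid f_r-f_p$ for every admissible $r>p$, so no pair survives. In each of the three surviving cases one verifies $s_1p+s_2r+s_3pr+f=120$, which is immediate (e.g.\ $2\cdot3+2\cdot57=120$ and $1\cdot5+1\cdot115=120$).

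I do not expect any genuine obstacle: the argument is entirely arithmetic once Lemma \ref{compuesto} and the prime-order data are in hand. The only point that needs a moment of care is confirming in each case that the bounds $f\le f_p<r$ and $ps_1+f=f_r$ really do pin down $s_1,s_2,f$ uniquely before the divisibility of $s_3$ is examined; after that, elimination of the twelve unwanted pairs is mechanical.
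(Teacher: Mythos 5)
Your proposal is correct and follows essentially the same route as the paper: both apply Lemma \ref{compuesto}(a) to $\sigma^r$ and $\sigma^p$ and match the resulting types against the prime-order data from (\ref{table}) together with part a) of the Theorem, yours merely packaging the case analysis as a uniform system of four Diophantine conditions where the paper splits into the cases $p=3$, $p=5$, $p>5$. The arithmetic eliminations you list (e.g.\ $3\nmid c_r$ for $r\in\{7,23,29\}$, $5\mid c_r$ only for $r=23$, $p\nmid c_r$ for $p\ge 7$) check out against the table.
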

\begin{proof}
Let $3\le p<r\le 29$. In order to prove the Lemma we distinguish three cases. \\
Case $p=3$:\\
In this case $\sigma^r$ is an automorphism of type $3$-$(s_{1}+s_{3}r;
s_{2}r+f)$.  Thus $s_2=f=0$ and $s_1+s_3r=40$, since we proved already that elements of order $3$ have no fixed points. Thus
$\sigma^3$ is of type $r$-$(3s_{3}; 3s_{1})$. According
to  (\ref{table}), we get $r=5$, $s_3=8$, $s_1=0$, or $r=19$,
$s_3=s_1=2$. It follows that  $\sigma$ is of type $3 \cdot
5$-$(0,0,8;0)$ or $3\cdot 19$-$(2,0,2;0)$. \\
Case $p=5$: \\
Now $\sigma^r$ is an automorphism of type $5$-$(s_{1}+s_{3}r;
s_{2}r+f)$ and therefore $s_2=f=0$, $s_1+s_3r=24$, since
elements of order $5$ also have no fixed points.  Thus
$\sigma^5$ is of type $r$-$(5s_{3}; 5s_{1})$. Looking again at
 (\ref{table}), we see that  $r=23$ and $s_3=s_1=1$ is the only possibility.
It follows that $\sigma$ is of type $5 \cdot 23$-$(1,0,1;0)$. \\
Final case  $p>5$: \\
Now $\sigma^r$ is an automorphism of type $p$-$(s_{1}+s_{3}r;
s_{2}r+f)$ and the data in  (\ref{table}) lead to $s_2r+f=1$, 4, 5 or 6.
Since $r \geq 19$ we obtain $s_2=0$. Thus
$\sigma^p$ is of
 type $r$-$(s_{3}p; s_{1}p+f)$ where $s_3p=4$, 5 or 6, which is
 not possible as $p>5$. This proves that there are no possible
 automorphisms in this case.
\end{proof}

\section{The structure of the automorphism group if all involutions act fixed point freely}

The first author proved in \cite{Stefka} that
 involutions of the automorphism group of
 a binary self-dual extremal code $C$ of length $n=24m > 24$ permute the $n$ coordinates without fixed points unless $n = 120$, the case we are considering in
 this paper.
 In the exceptional case  involutions have no fixed points or exactly $24$. Throughout this section we assume
that all involutions act fixed point freely. In this case the Theorem and the list in (\ref{table}) show that all automorphisms have a unique
cycle structure. This enables us to compute the order of $G=\Aut(C)$ via the Cauchy-Frobenius lemma (\cite{Isaacs}, 1A.6) which says that
$$ t=\frac{1}{|G|}\sum_{g\in G}|\Fix(g)|$$
is the number of orbits of $G$ on the coordinates of $C$. Here $\Fix(g)$ denotes the number of fixed points of $g$. In order to compute $t$ we only need to determine the number of
automorphisms of prime order $p$ for $p \geq 7$ since only those have fixed points assuming that
involutions are fixed point free.

Let $\tau_p \in G$ of prime order $p \geq 7$. According to Sylow's theorem the number of Sylow $p$-subgroups is given by
$$  n_p = |G:N_G(\langle \tau_p \rangle)| \equiv 1 \  (\bmod  p).$$

If $\sigma\in N_{G}(\langle \tau_{p} \rangle)$ is an automorphism of prime order
$r \not= p$ then
$\sigma\tau_p\sigma^{-1}=\tau_p^s$
for some
 integer $0   \leq s<p$. Hence $\sigma$ acts on the set  $T=\{\Omega_{c+1},\dots, \Omega_{c+f}\}$  of fixed
points of $\tau_p$. Since $\ord(\sigma|_T)\mid \ord(\sigma)=r$ and
$\ord(\sigma|_T)\le f\le 6$ (according to the Theorem and the list
in (\ref{table})), we see that $r=2$, 3, 5 or $\ord(\sigma|_T)=1$.
Finally the $2$-part $|G|_2$ of $|G|$ is bounded by $8$ since a
Sylow $2$-subgroup of $G$ acts regularly on the coordinates in the
considered case.

\begin{lemma} \label{normalizer} \mbox{} We have
 \begin{itemize}
  \item[\rm a)] $n_{29} =1,\, 2\cdot3\cdot5,\, 2^2\cdot3\cdot7\cdot 19,\, 2^3\cdot3\cdot 23,\, 2^2\cdot 5 \cdot 7 \cdot 23$ or $3\cdot5\cdot19\cdot23$.
  \item[\rm b)] $n_{23} = 1, \, 2^3\cdot3, \, 2\cdot5\cdot7, \ 2^2\cdot29, \, 2^3\cdot 5\cdot19$ or  $2^3\cdot5\cdot7\cdot29$.
  \item[\rm c)] $n_{19}=1, \, 5\cdot 23, \,  3\cdot 7\cdot 29, \, 3\cdot 5\cdot 7\cdot
23\cdot 29, \,  2\cdot 29, \, 2\cdot 5\cdot 23\cdot 29, \, 2\cdot
3\cdot 5\cdot 7, \, 2^2\cdot 5, \, 2^2\cdot 3\cdot 5\cdot 7\cdot
29, \,  2^3\cdot 5\cdot 29$ \\ \mbox{} \ \qquad or $2^3\cdot 3\cdot 23$.
 \end{itemize}
\end{lemma}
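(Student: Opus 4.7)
The plan is to combine Sylow's theorem with the bound on $|G|$ already established, reducing the statement to a finite congruence enumeration. Sylow gives $n_p \equiv 1 \pmod{p}$, and since $\langle\tau_p\rangle$ is a Sylow $p$-subgroup it lies in $N_G(\langle\tau_p\rangle)$, so $p \nmid n_p$. Together with Proposition \ref{cuadrado-divisor} (from which $p^2 \nmid |G|$ for every odd prime $p$) and $|G|_2 \le 8$ (noted in the paragraph preceding the statement), this forces $n_p$ to be a divisor of $2^3 \cdot 3 \cdot 5 \cdot 7 \cdot 19 \cdot 23 \cdot 29/p$ satisfying the Sylow congruence.

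For each $p \in \{29,23,19\}$ I would enumerate these divisors by working multiplicatively in $(\mathbb{Z}/p\mathbb{Z})^*$: pick a primitive root $g$ (for instance $g=2$ works for $p=29$ and $p=19$, and $g=5$ works for $p=23$), tabulate the discrete logarithms $\log_g q$ for every prime $q \ne p$ in $\{2,3,5,7,19,23,29\}$, and iterate over the $4 \cdot 2^5 = 128$ exponent vectors $(a,b,c,d,e,f) \in \{0,1,2,3\} \times \{0,1\}^5$, keeping only those whose weighted $\log_g$-sum vanishes modulo $p-1$. This is expected to produce exactly six solutions for $p = 29$ (in $\mathbb{Z}/28\mathbb{Z}$), six for $p = 23$ (in $\mathbb{Z}/22\mathbb{Z}$), and eleven for $p = 19$ (in $\mathbb{Z}/18\mathbb{Z}$), matching the three lists in the statement.

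The structural input introduced in the paragraph preceding the lemma—the embedding $N_G(\langle\tau_p\rangle)/C_G(\tau_p) \hookrightarrow \mathbb{Z}/(p-1)\mathbb{Z}$, the constraint $\ord(\sigma|_T) \le f$, and Lemma \ref{multiplo}—serves as a cross-check: for each listed $n_p$ the complementary order $|N_G(\langle\tau_p\rangle)|$ involves only the primes permitted by that analysis, namely $\{2,29\}$ for $p=29$, $\{2,5,23\}$ for $p=23$, and $\{2,3,19\}$ for $p=19$ (since, for example, $3\cdot 29=87$, $5\cdot 29=145$, $19\cdot 29=551$, $23\cdot 29=667$ are all absent from $\{15,57,115\}$, while $5\cdot 23=115$ and $3\cdot 19=57$ survive). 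This confirms that the arithmetic enumeration neither overproduces nor underproduces solutions. The main obstacle is therefore not conceptual but purely one of bookkeeping: the discrete-logarithm table must be assembled correctly and all $128$ exponent vectors inspected without omission or duplication.
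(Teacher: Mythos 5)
Your proposal is correct, and I checked the arithmetic: for each $p\in\{29,23,19\}$ the divisors of $2^3\cdot3\cdot5\cdot7\cdot19\cdot23\cdot29/p$ that are $\equiv 1 \pmod p$ are exactly the six, six and eleven values in the statement, so the Sylow congruence together with $n_p\mid |G|/p$ and $|G|_2\le 8$ already pins down the lists. This is, however, a genuinely different route from the paper's. The paper first determines which primes $r$ can divide $|N_G(\langle\tau_p\rangle)|$ at all, using the action of a prime-order element $\sigma\in N_G(\langle\tau_p\rangle)$ on the $f$ fixed points of $\tau_p$ together with the fixed-point data in (\ref{table}) (e.g.\ for $p=23$ one gets $r=5$ only, since $f=5$ is odd and an involution permuting five points would have to fix one, contradicting the standing hypothesis; for $p=29$ only $r=2$; for $p=19$ only $r\in\{2,3\}$). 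It then writes $n_{29}=|G|/(2^x\cdot 29)$, $n_{23}=|G|/(5^y\cdot 23)$, $n_{19}=|G|/(2^x 3^z\cdot 19)$ and applies the congruence. Since the excluded primes simply land entirely in $n_p$ and the allowed ones are free anyway, the two candidate sets coincide, which is why your shortcut produces the identical lists. What you lose by skipping the normalizer analysis is not this lemma but the next one: the proof of Lemma \ref{order} repeatedly uses the exact shape $n_{19}=2^{a-x}3^{b-z}\cdot 5^c\cdot 7^d\cdots$, i.e.\ that the full $\{5,7,23,29\}$-part of $|G|$ must appear in $n_{19}$, and this comes precisely from knowing $|N_G(\langle\tau_{19}\rangle)|=2^x3^z\cdot 19$. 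Two small caveats: your ``cross-check'' list of permitted normalizer primes for $p=23$ wrongly includes $2$ (harmless here, but it would corrupt the downstream argument), and the $128$-case enumeration is asserted rather than exhibited; since the entire content of the lemma is that enumeration, it should be written out or at least summarized in a table.
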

\begin{proof} a) First observe that  $\tau_p$ has exactly $f=4$ fixed points. Therefore $r=2$.
Hence $n_{29}=\frac{|G|}{2^x\cdot 29}=2^{a-x}\cdot 3^b\cdot
5^c\cdot 7^d\cdot 19^e\cdot 23^f$. Since $n_{29}\equiv 1\pmod
{29}$ we obtain exactly the six possibilities mentioned in a). \\
b) In this case we have $f=5$ and therefore $r=5$.
Hence $$n_{23}=\frac{|G|}{5^y\cdot 23}=2^{a}\cdot 3^b\cdot
5^{c-y}\cdot 7^d\cdot 19^e\cdot 29^g.$$ Since $n_{23}\equiv 1\pmod
{23}$ exactly the six possibilities mentioned in b) may occur. \\
c) Now $f=6$ and therefore $r=2$ or $r=3$.
Hence $$n_{19}=\frac{|G|}{2^x\cdot 3^z\cdot 19}=2^{a-x}\cdot
3^{b-z}\cdot 5^{c}\cdot 7^d\cdot 23^f\cdot 29^g.$$
The congruence
$n_{19}\equiv 1\pmod {19}$ leads to the 11 possibilities in c).
\end{proof}

\begin{lemma} \label{order} \mbox{}
  \begin{itemize}
    \item[\rm a)] If $29 \mid |G|$ then $|G| = 2^a\cdot29$ or $|G|=2^a\cdot3\cdot5\cdot29$ where $0 \leq a \leq 3$.
    \item[\rm b)] If $23 \mid |G|$ then $|G| = 5^c\cdot 23$ or $|G| = 2^3 \cdot3\cdot 5^c\cdot 23$ where $c=0,1$.
    \item[\rm c)] If $19 \mid |G|$ then $|G|= 2^a\cdot3^b \cdot 19$ or $|G| = 2^a\cdot 3^b \cdot 5 \cdot 19$ where $0 \leq a \leq 3$ and $b=0,1$.
    \item[\rm d)] If $7 \mid |G|$ then $|G|= 7$ or $2^3\cdot7$.
  \end{itemize}
\end{lemma}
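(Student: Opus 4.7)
The plan is to run a tight Sylow analysis for each $p \in \{7, 19, 23, 29\}$, using $|G| = n_p \cdot |N_G(P_p)|$ with $n_p$ from Lemma \ref{normalizer} and $|N_G(P_p)|$ pinned down as in the discussion preceding Lemma \ref{normalizer}. Each candidate $|G|$ is then cross-checked against the Sylow restrictions imposed by the other primes dividing it, and a handful of borderline cases that survive these checks are killed by a finer centralizer inequality combined with \cite{BW}.

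For part (d), the first step is to show $|N_G(P_7)| = 7$. This is a direct copy of the Lemma \ref{normalizer} analysis with $f = 1$: if $\sigma \in N_G(P_7)$ has prime order $r \neq 7$, then either $r \in \{2, 3, 5\}$ or $\ord(\sigma|_T) = 1$; since $|T| = 1$, in both cases $\sigma$ fixes the single element of $T$, contradicting the fixed-point-free assumption on involutions and the fixed-point counts of (\ref{table}) for $r \in \{3, 5\}$. For $r \in \{19, 23, 29\}$, the fact that $r$ does not divide $|\Aut(\mathbb{Z}/7)| = 6$ forces $\sigma$ to centralize $\tau_7$, so $\sigma\tau_7$ has order $7r \notin \{15, 57, 115\}$, contradicting part (c) of the Theorem.

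For parts (a)--(c) I would enumerate the six, six, and eleven entries of Lemma \ref{normalizer} for $n_{29}, n_{23}, n_{19}$, form $|G|$, impose $|G|_2 \leq 8$, and discard any candidate whose prime divisors are incompatible with the other $n_q$ lists; for instance $n_{29} = 2^3 \cdot 3 \cdot 23$ forces $|G|/23 = 2^3 \cdot 3 \cdot 29$, which is not an admissible value of $n_{23}$, a contradiction. Part (d) then follows from (a)--(c) by a short enumeration: they rule out $19, 23, 29 \mid |G|$ whenever $7 \mid |G|$, so $|G| \mid 2^3 \cdot 3 \cdot 5 \cdot 7$, and $n_7 \equiv 1 \pmod 7$ leaves only $\{7, 56, 105, 840\}$. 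Since $|N_G(P_7)| = 7$, Burnside's normal $p$-complement theorem provides a normal $7$-complement $K$ in $G$; for $|G| = 105$ the $P_7$-action on $K \cong \mathbb{Z}/15$ is trivial (since $7$ does not divide $|\Aut(\mathbb{Z}/15)| = 8$), so $G \cong \mathbb{Z}/105$ would contain a forbidden element of order $105$, and for $|G| = 840$ the $P_7$-action on the Sylow $3$- and Sylow $5$-subgroups of $K$ would have to be fixed-point-free, contradicting the fact that $7$ divides none of the admissible Sylow $3$- or Sylow $5$-counts inside $K$.

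The main obstacle is a small family of candidates that pass all routine Sylow congruences, the representative being $|G| = 2^3 \cdot 3 \cdot 7 \cdot 19 \cdot 29$, which arises from $n_{29} = 2^2 \cdot 3 \cdot 7 \cdot 19$. Here the only admissible $n_{19}$ from Lemma \ref{normalizer}(c) dividing $|G|/19$ is $3 \cdot 7 \cdot 29$, which forces $|N_G(P_{19})| = 2^3 \cdot 19$. I would then invoke the sharpening $|N_G(P_{19})|/|C_G(\tau_{19})|$ divides $|\Aut(\mathbb{Z}/19)| = 18$, together with the result of \cite{BW} that $\Aut(C)$ has no element of order $2 \cdot 19$: the latter forces $|C_G(\tau_{19})|$ to be odd, hence equal to $19$, so the quotient has order $8 \nmid 18$, a contradiction. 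The analogous argument using the non-existence of elements of order $2 \cdot 29$ from \cite{BW} handles the remaining stubborn candidates involving $29$, completing the proof.
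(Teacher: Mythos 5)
Your proposal is correct, but it departs from the paper's proof at the two places where the real work happens, so it is worth recording the differences. For the stubborn survivors of the routine Sylow cross-checks in parts (a)--(c) -- namely $|G|=2^a\cdot3\cdot7\cdot19\cdot29$ and $|G|=2^3\cdot5\cdot19\cdot23$ -- the paper does \emph{not} invoke \cite{BW}: it applies the Cauchy--Frobenius orbit-counting lemma and derives the contradictions $t=7/2$ and $t=11/2$, deliberately postponing the use of \cite{BW} to Proposition \ref{order2}. Your alternative, pinning down $|N_G(P_{19})|=2^a\cdot 19$ with $a\ge 2$ and then noting that the absence of elements of order $2\cdot 19$ forces $|C_G(\tau_{19})|=19$ so that $|N_G(P_{19})/C_G(\tau_{19})|=2^a\nmid 18$, is valid and in fact kills \emph{both} stubborn cases; your closing remark that the ``analogous $2\cdot29$ argument'' handles the rest is a slight misdirection, since the $23$-case $2^3\cdot5\cdot19\cdot23$ is also disposed of via the $19$-Sylow normalizer, and no $2\cdot 29$ version is actually needed. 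The trade-off is that the paper's route keeps Lemma \ref{order} independent of \cite{BW}, while yours imports that external result earlier but avoids the two ad hoc orbit counts. In part (d) the difference is more substantial and to your credit: the paper disposes of $|G|=105$ and $|G|=840$ by a MAGMA search over all groups of those orders, checking $|N_G(\langle\tau_7\rangle)|\ne 7$, whereas you first establish $N_G(P_7)=P_7$ from the fixed-point discussion (correctly handling $r\in\{19,23,29\}$ via $r\nmid|\Aut(\mathbb{Z}/7)|$ and the forbidden composite orders), then apply Burnside's normal $7$-complement theorem and elementary Sylow counting inside the complement $K$ to produce a forbidden element of order $21$ or $35$. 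This is a clean, computer-free replacement for the paper's computation and reaches the same conclusion $|G|=7$ or $56$.
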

\begin{proof} a) Using Lemma \ref{normalizer}, we see that $|G|= 2^a\cdot 29$, $2^a\cdot 3\cdot 5\cdot 29$,
$2^a\cdot 3\cdot 7\cdot 19\cdot 29$, $2^3\cdot 3\cdot 23\cdot 29$,
$2^a\cdot 5\cdot 7\cdot 23\cdot 29$ or $2^a\cdot 3\cdot 5\cdot
19\cdot 23\cdot 29$.
In the last three cases we have
$n_{23}=2^3\cdot 3\cdot 29$, $2^a\cdot 5^{1-y}\cdot 7\cdot 29$, or
$2^a\cdot 3\cdot 5^{1-y}\cdot 19\cdot 29$. Since $n_{23} \equiv 1 \, (\bmod 23)$ only $n_{23}=2^3\cdot
5\cdot 7\cdot 29$ is possible which leads to $|G|=2^3\cdot 5\cdot 7\cdot
23\cdot 29$.
But in this case $n_7=2^3\cdot 5\cdot 23\cdot 29\equiv
3\pmod 7$, a contradiction. Thus
 23 does not divide $|G|$.
 If $|G|=2^a\cdot 3\cdot 7\cdot
19\cdot 29$ then $n_{19}=2^{a-x}\cdot 3^{1-y}\cdot 7\cdot 29$.
Looking at the possibilities in Lemma \ref{normalizer} we see that $n_{19}=3\cdot
7\cdot 29$. For $n_7$ we get $n_7=2^a\cdot 3\cdot 19\cdot
29\equiv 2^a\pmod 7 \equiv 1\pmod 7$, hence
$a=3$ since $a \geq 2$ in this case.

Applying the Cauchy Frobenius lemma we obtain
$$ \begin{array}{rcl}
t & = &  \frac{120+6n_7+6\cdot 18n_{19}+4\cdot 28n_{29}}{2^3\cdot 3\cdot 7\cdot
19\cdot 29} \\[2ex]
 & = & \frac{120+6\cdot 2^3\cdot 3\cdot 19\cdot
29+6\cdot 18\cdot 3\cdot 7\cdot 29+4\cdot 28\cdot 2^2\cdot 3\cdot
7\cdot 19}{2^3\cdot 3\cdot 7\cdot 19\cdot 29}=\frac{7}{2},
\end{array} $$ a contradiction. Therefore only the first two cases are possible, namely
$|G|=2^a\cdot 29$ or $2^a\cdot 3\cdot
5\cdot 29$ where $a=0,1,2,3$. \\
b) First note that $ 29 \nmid |G|$ as shown above. Hence $n_{23}=1$, $2^3\cdot3$,
$2\cdot5\cdot7$ or $2^3\cdot 5\cdot 19$, by Lemma \ref{normalizer}. Thus $|G|=5^c\cdot 23$, $2^3\cdot3\cdot
5^c\cdot 23$, $2\cdot5\cdot7\cdot 23$ or $2^3\cdot 5\cdot 19\cdot 23$. In
the last case $n_{19}=2^{3-x}\cdot 5\cdot 23$ which froces
$n_{19}=5\cdot 23$. It follows
$$t=\frac{120+6\cdot 18n_{19}+5\cdot
22n_{23}}{2^3\cdot 5\cdot 19\cdot 23}=\frac{120+6\cdot 18\cdot
5\cdot 23+5\cdot 22\cdot 2^3\cdot 5\cdot 19}{2^3\cdot 5\cdot
19\cdot 23}=\frac{11}{2},$$
a contradiction.  If
$|G|=2\cdot5\cdot7\cdot 23$ then $n_7=230\equiv 6\pmod 7$, a contradiction again.
Thus $|G|=5^c\cdot 23$ or
$2^3\cdot3\cdot 5^c\cdot 23$ where $c=0,1$. \\
c) In this case both 23 and 29 do not divide $|G|$. Thus according to Lemma \ref{normalizer} we have $n_{19}=1$,
$2\cdot 3\cdot 5\cdot 7$ or $2^2\cdot 5$. It follows that $|G|=2^a\cdot
3^b\cdot 19$, $2^a\cdot 3\cdot 5\cdot 7\cdot 19$ or $2^a\cdot
3^b\cdot 5\cdot 19$. In the second case we have $n_7=2^a\cdot
3\cdot 5\cdot 19\equiv 2^a\cdot 5\not\equiv 1\pmod 7$ for $0\le
a\le 3$. Thus $|G|=2^a\cdot
3^b\cdot 19$ or $2^a\cdot 3^b\cdot 5\cdot 19$ where $a=0,1,2,3$ and
$b=0,1$.\\
d) By a), b) and c) we see that $G$ is a $\{2,3,5,7\}$-group. Since an element of order
$7$ has exactly one fix point we get $n_7 = \frac{|G|}{7}$.
If $|G|=2^a3^b5^c7$ then the Cauchy-Frobenius Lemma
yields
 $$t=\frac{1}{2^a3^b5^c7}(120+\sum_{\ord(g)=7}1)=\frac{1}{2^a3^b5^c7}(120+6n_7) = \frac{120}{2^a3^b5^c7}+\frac{6}{7}$$
and $t \in \N$ forces
$$(a,b,c) = (0,0,0), (3,0,0), (0,1,1), (3,1,1).$$
If $(a, b, c)=(0, 1,1)$ then $|G|=3\cdot5\cdot7=105$. Using MAGMA we see that there are exactly two groups of order $105$, all with $|N_{G}(\langle\tau_{7}\rangle)|=105 \neq 7$.
In the latter case $(a, b, c)=(3, 1, 1)$ we have $| G |=840$ and  Magma shows that there are exactly 186 groups of order 840, all with $|N_{G}(\langle\tau_{7}\rangle)|=105, 840 \neq 7$.
Therefore $|G|=7$ or $56$ .
\end{proof}

\begin{lemma} \label{simple} The only nonabelian composition factor which possibly  occurs in $\Aut(C)$ is the alternating group $\Alt_5$.
\end{lemma}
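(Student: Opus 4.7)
The plan is to combine the arithmetic restrictions on $|G|=|\Aut(C)|$ recorded in Lemma~\ref{order} with the known list of small nonabelian simple groups. Let $S$ be any nonabelian composition factor of $G$. Then $|S|$ divides $|G|$, so by part b) of the Theorem together with the bound $|G|_{2}\le 8$ noted at the beginning of this section, $|S|$ must be of the form $2^{a}\cdot m$ with $0\le a\le 3$ and $m$ an odd squarefree divisor of $3\cdot 5\cdot 7\cdot 19\cdot 23\cdot 29$. Burnside's $p^{a}q^{b}$-theorem then forces $|S|$ to have at least three distinct prime divisors.

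The first step is to exclude each of the primes $7,19,23,29$ from $|S|$. I would invoke the well-documented fact that for $p\in\{7,19,23,29\}$ the smallest nonabelian simple group of order divisible by $p$ is $\PSL(2,p)$, of orders $168$, $3420$, $6072$, $12180$, respectively. These are then to be compared against the upper bounds coming from Lemma~\ref{order}, namely $|G|\le 56$ when $7\mid|G|$, $|G|\le 2^{3}\cdot 3\cdot 5\cdot 19=2280$ when $19\mid|G|$, $|G|\le 2^{3}\cdot 3\cdot 5\cdot 23=2760$ when $23\mid|G|$, and $|G|\le 2^{3}\cdot 3\cdot 5\cdot 29=3480$ when $29\mid|G|$. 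In each case the corresponding $\PSL(2,p)$ is strictly larger than the allowed $|G|$, so $p\nmid|S|$.

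Once $7,19,23,29$ are ruled out, $|S|$ divides $2^{3}\cdot 3\cdot 5=120$, and a direct inspection of simple groups of order at most $120$ leaves $\Alt_{5}$ as the only nonabelian possibility. This finishes the argument.

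The main obstacle I anticipate is justifying in a self-contained way the claim that $\PSL(2,p)$ is the smallest nonabelian simple group of order divisible by $p$ for $p\in\{19,23,29\}$ without appealing to the full classification of finite simple groups. A purely elementary substitute is to list the (finitely many, all quite small) numerical values of $|G|$ allowed by Lemma~\ref{order}, and for each divisor $d>1$ of such a value use Sylow together with Burnside's normal $p$-complement theorem to rule out $d$ as the order of a nonabelian simple group different from $60$; these are straightforward arithmetic checks.
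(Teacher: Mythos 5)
Your argument is correct and takes essentially the same route as the paper: both combine the order bounds of Lemma~\ref{order} with the (classification-based) knowledge of nonabelian simple groups of small order. The paper phrases this as a single bound $|G|\le 3480$ followed by elimination of the eight simple groups of order at most $3480$ using $11,13,17\nmid |G|$ and $3^{2}\nmid |G|$, while you exclude each prime $p\in\{7,19,23,29\}$ from a composition factor by comparing the per-prime bounds of Lemma~\ref{order} with the minimal order of a simple group divisible by $p$ and then reduce to $|S|\mid 120$ --- a harmless reorganization resting on the same external input.
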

\begin{proof}
Let $H$ be a  nonabelian composition factor of $G$. If $G$ is a $\{2,3,5\}$-group then $|G| \mid 2^3\cdot 3 \cdot 5 =120$ and $H$ must
be isomorphic to $\Alt_5$. Thus we may assume that $ p \mid |G|$ where $p=7,19,23$ or $29$. By Lemma \ref{order}, we have $|G| \leq 3480$.
According to the classification of finite simple nonabelian groups, $H$ must be a group in the following list.
$$ \Alt_5, \, \Alt_6, \,  \PSL(2,8), \, \PSL(2,11), \, \PSL(2,13), \, \PSL(2,17), \,  \Alt_7, \, \PSL(2,19)$$
Note that $\PSL(2,11), \PSL(2,13)$ and $\PSL(2,17)$ can not occur since neither $11, 13$ nor $17$  divide $|G|$.
Furthermore $\Alt_6, \Alt_7, \PSL(2,8), \PSL(2,19)$ are not possible since $ 3^2 \nmid |G|$. Thus only the group $\Alt_5$ is left.
\end{proof}

To sharpen the results of Lemma \ref{order} we need the following fact.

\begin{lemma}  \label{B-W} {\rm \cite{BW}}  The automorphism group of an extremal self-dual code of length $120$ does not contain
 elements of order $ 2\cdot 19$ and $2\cdot29$, independent whether involutions have fixed points or not.
\end{lemma}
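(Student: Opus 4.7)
Assume for contradiction that $\sigma\in\Aut(C)$ has order $2p$ for $p\in\{19,29\}$. Set $\rho=\sigma^2$ (of order $p$) and $\tau=\sigma^p$ (an involution). The first step is to fix the cycle structure of $\sigma$. Writing it as $(\alpha,\beta,\gamma,\delta)$ (numbers of fixed points, $2$-cycles, $p$-cycles and $2p$-cycles), the type of $\rho$ in (\ref{table}) gives $\alpha+2\beta=f$ and $\gamma+2\delta=c$ with $(c,f)=(6,6)$ for $p=19$ and $(c,f)=(4,4)$ for $p=29$. The type of $\tau$ then forces $\alpha+p\gamma\in\{0,24\}$; a short case analysis using $\alpha\leq f$, the constraint $p\gamma\leq 24$, and the integrality of $\beta$ in $\alpha+2\beta=f$ rules out $\alpha+p\gamma=24$, so $\alpha=\gamma=0$. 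Thus $\sigma$ has exactly $c/2$ cycles of length $2$ and $c/2$ of length $2p$, and $\tau$ is fixed point free.

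Next I would apply the usual fixed-code machinery to $\rho$. Since $\rho$ has odd prime order, $\pi(F_\rho(C))$ is a binary self-dual code of length $c+f=2c$, namely length $12$ if $p=19$ and length $8$ if $p=29$. The minimum distance of $C$ forces every nonzero $(u|w)\in\pi(F_\rho(C))$ to satisfy $p\wt(u)+\wt(w)\geq 24$; since $f<24$, the subcode $\mathcal{B}$ is trivial, so $k_2=0$, and the Balance Principle (Lemma \ref{pless}) together with $c=f$ then gives $k_1=0$ as well. A generator matrix of $\pi(F_\rho(C))$ therefore reduces to the form $(D\;E)$ with $D$ and $E$ both invertible $c\times c$ matrices, and up to coordinate order $\pi(F_\rho(C))$ is the graph $\{(x,\varphi x):x\in\F_2^c\}$ of a linear isometry $\varphi:\F_2^c\to\F_2^c$ satisfying $\varphi\varphi^T=I$. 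Because $\sigma$ commutes with $\rho$, it acts on $F_\rho(C)$ and induces an involution on $\pi(F_\rho(C))$; tracing how $\sigma$ permutes the cycles of $\rho$ (its $2$-cycles pair up fixed points of $\rho$, and its $2p$-cycles pair up $p$-cycles of $\rho$ two by two) shows that this induced involution is fixed point free and acts as $(x,\varphi x)\mapsto(Px,Q\varphi x)$ for permutation matrices $P,Q$, each a product of $c/2$ transpositions, with the intertwining relation $Q\varphi=\varphi P$.

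The final step is to extract a contradiction from this rigid picture. For $p=29$ (so $c=4$), the conditions $\varphi\varphi^T=I$ and $Q=\varphi P\varphi^{-1}$ leave only finitely many $\varphi$ up to equivalence, and each candidate can be ruled out by lifting codewords back to $C$ and examining their $\tau$-translates, or by comparing against the $\F_2[\langle\sigma\rangle]$-module structure of $C$. For $p=19$ (so $c=6$), the inequality $19+\wt(\varphi e_i)\geq 24$ forces every column of $\varphi$ to have weight at least $5$, which together with $\varphi\varphi^T=I$ and the intertwining relation pins $\varphi$ down essentially to $\varphi=J-I$ (the all-ones matrix minus the identity); a split weight enumerator calculation in the spirit of Section 3 for $\pi(F_\rho(C))$, compared against what such a calculation forces for $C$ itself, then yields the contradiction. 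I expect the main obstacle to be precisely this last comparison: the graph-of-isometry description is by itself not quite sharp enough, so one genuinely has to combine the full $[120,60,24]$ structure with the detailed action of $\sigma$, possibly bringing in the fixed subcode $F_\tau(C)$ associated to the fixed point free involution $\tau$ as an auxiliary tool.
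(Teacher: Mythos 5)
First, a point of reference: the paper does not prove this statement at all --- Lemma~\ref{B-W} is imported verbatim from \cite{BW}, so there is no in-paper argument to compare yours against. Judged on its own terms, your attempt has a correct and useful first half but does not reach a contradiction. The cycle-structure reduction is fine: for $p=19$ the case $\alpha+19\gamma=24$ forces $\gamma=1$, $\alpha=5$, which is incompatible with $\alpha+2\beta=6$, and for $p=29$ the bound $29>24$ kills $\gamma$ immediately, so indeed $\alpha=\gamma=0$ and $\tau=\sigma^p$ is fixed point free. Likewise $k_1=k_2=0$, the graph-of-isometry description of $\pi(F_{\sigma^2}(C))$, and (for $p=19$) the forcing of all columns of $\varphi$ to weight $5$, hence $\varphi=J-I$ up to coordinate permutation, are all correct.

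The genuine gap is the final step, and it is not a gap that can be patched within the framework you set up. For $p=19$ the surviving candidate $\varphi=J-I$ is \emph{internally consistent}: the lifted weights of the nonzero codewords of $\{(x,(J+I)x)\}$ are $24,40,60,80,96,120$, all at least $24$ and all divisible by $4$, so neither extremality nor double-evenness of $C$ is violated by $\pi(F_{\sigma^2}(C))$. The same happens for $p=29$: the doubly-even condition $29\wt(u)+\wt(w)\equiv 0 \pmod 4$ again forces $\varphi=J-I$ on $\F_2^4$, and the lifted weights $32,60,88,120$ are all admissible. So ``a split weight enumerator calculation then yields the contradiction'' is not merely unproved --- no contradiction is available from $F_{\sigma^2}(C)$ alone, because a perfectly good candidate for that fixed subcode exists. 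You half-acknowledge this (``the graph-of-isometry description is by itself not quite sharp enough''), but the missing ingredient is precisely the entire content of \cite{BW}: one must exploit the involution $\tau$ itself, via the $\F_2[\langle\tau\rangle]$-module structure of $C$ (the interplay between $F_\tau(C)$, the quotient module, and extremality of length-$24m$ codes), and then mesh that with the order-$p$ information. As written, your argument establishes constraints that a counterexample would have to satisfy, but never shows that none does.
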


\begin{prop} \label{order2} Let $G=\Aut(C)$ where $C$ is an extremal self-dual code of length $120$. Suppose that all  involutions of $G$ act
fixed point freely. Then we have.

\begin{itemize}
\item[\rm a)] If $29 \mid |G|$ then $|G| = 2^a\cdot29$  where $0 \leq a \leq 2$.
    \item[\rm b)] If $23 \mid |G|$ then $|G| = 5^c\cdot 23$ or $|G| = 2^3 \cdot 5^c\cdot 23$ where $c=0,1$.
    \item[\rm c)] If $19 \mid |G|$ then $|G|= 2^a\cdot3^b \cdot 19$ where $0 \leq a,b \leq 1$
     \item[\rm d)] If $7 \mid |G|$ then $|G|= 7$ or $2^3\cdot7$.
    \item[\rm e)] If $G$ is a $\{2,3,5\}$-group then $|G| \leq 120$.
\end{itemize}
\end{prop}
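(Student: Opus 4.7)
The plan is to sharpen each case of Lemma \ref{order} by combining Lemma \ref{B-W} with the Theorem via a \emph{fixed-point principle}: any $\sigma\in G$ acting fixed-point-freely on the $120$ coordinates and commuting with $\tau_p$ restricts to a fixed-point-free permutation of $\Fix(\tau_p)$, so some power $\sigma^k$ (with $k=\ord(\sigma|_{\Fix(\tau_p)})$) fixes $\Fix(\tau_p)$ pointwise and is therefore either trivial or has order in $\{7,19,23,29\}$ by Theorem b). Combined with the parity of $f$ (for $p=7,23$), Lemma \ref{B-W} (for $p=19,29$), and Theorem c) (forbidding order-$r$ elements in $C_G(\tau_p)$ whenever $rp$ is odd composite outside $\{15,57,115\}$), this yields
\[
C_G(\tau_7)=\langle\tau_7\rangle,\;\; C_G(\tau_{29})=\langle\tau_{29}\rangle,\;\; |C_G(\tau_{19})|\in\{19,57\},\;\; |C_G(\tau_{23})|\in\{23,115\}.
\]

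Since $N_G(\langle\tau_p\rangle)/C_G(\langle\tau_p\rangle)\hookrightarrow\Aut(\Z_p)\cong\Z_{p-1}$, each $|N_G(\langle\tau_p\rangle)|$ is tightly bounded; substituting into $n_p\equiv 1\pmod p$ together with $n_p\mid |G|/p$ eliminates the overwhelming majority of orders from Lemma \ref{order}. For part d), $N_G(\langle\tau_7\rangle)$ must fix the unique $\tau_7$-fixed point $p_0$, whose $G$-stabilizer contains no fixed-point-free element, so $N_G(\langle\tau_7\rangle)=\langle\tau_7\rangle$; then $n_7=|G|/7=2^a$ and $2^a\equiv 1\pmod 7$ give $a\in\{0,3\}$. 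Part e) is immediate from a)--d), Theorem b), and the Sylow-$2$ bound $|G|_2\leq 8$, which holds because a Sylow $2$-subgroup acts regularly on the $120$ coordinates under the fixed-point-free assumption on involutions.

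The main obstacle is a residual list of orders for which the numerics above are all consistent, notably $|G|\in\{870,1740,3480\}$ in part a) and $|G|\in\{552,2760\}$ in part b). I plan to close each such case by combining Lemma \ref{simple} with a Fitting-subgroup analysis. If $\Alt_5$ appears as a composition factor, analysis of the normal-subgroup structure---using that $\Alt_5$ has trivial center, that no prime in $\{19,23,29\}$ divides $|\Aut(\Alt_5)|=120$, and that a characteristic Sylow subgroup of a normal subgroup of $G$ is normal in $G$---forces either $\langle\tau_p\rangle\trianglelefteq G$ (contradicting $n_p>1$) or an involution of $\Alt_5$ to centralize $\tau_p$, producing an element of order $2p$ forbidden by Lemma \ref{B-W} for $p\in\{19,29\}$ and by the derived exclusion of order $46$ for $p=23$ (an order-$46$ element would introduce an involution into $C_G(\tau_{23})$, impossible by parity of $f=5$). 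In the solvable case, the Fitting subgroup $F(G)\neq 1$, but the centralizer principle rules out every non-trivial $O_q(G)$: $O_p(G)$ contradicts $n_p>1$; $O_2(G)$ provides, via the natural action on $\Omega_1(Z(O_2(G)))$ through $GL_k(\F_2)$, an involution centralizing $\tau_p$ and hence an order-$2p$ contradiction; and $O_q(G)$ for $q\in\{3,5\}$ forces a $pq$-element forbidden by Theorem c). The sole surviving possibility is $O_5(G)=\Z_5$ when $p=23$ (since $115=5\cdot 23$ is allowed), which I would handle by descending to the quotient $G/O_5(G)$ and iterating the Sylow/Fitting argument. Hence $F(G)=1$, contradicting solvability, which closes every residual case.
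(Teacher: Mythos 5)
Your overall strategy coincides with the paper's: both start from Lemma \ref{order}, refine the Sylow counts using Lemma \ref{B-W}, the parity of the fixed-point sets, and the composite-order restrictions of the Theorem, and then kill the surviving orders by a minimal-normal-subgroup/Fitting analysis in which $\OO_2(G)$, $\OO_3(G)$, $\OO_5(G)$, $\OO_p(G)$ and a normal $\Alt_5$ each produce a forbidden element order. Your centralizer computations and the parity argument excluding involutions from $C_G(\tau_{23})$ are correct, and your treatment of part b) is actually more explicit than the paper's, whose proof of b) consists only of a mis-citation of Lemma \ref{order} even though Lemma \ref{order} b) still allows the factor $3$; excluding $552$ and $2760$ is exactly what is needed there.

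There is, however, a concrete gap in part c). The Sylow numerics do not eliminate the family $|G|=2^a\cdot 3^b\cdot 5\cdot 19$: since $N_G(\langle\tau_{19}\rangle)$ is a $\{2,3,19\}$-group of order $2^x3^z\cdot 19$ with $x,z\le 1$, the congruence $n_{19}\equiv 1\pmod{19}$ is satisfied by $n_{19}=2^2\cdot 5=20$, leaving $|G|\in\{380,760,1140,2280\}$ alive; none of these appear in your residual list. Worse, for $1140$ and $2280$ your Fitting analysis does not close the case: $\OO_3(G)=\Z_3$ survives because $\tau_{19}$ necessarily centralizes it (as $19\nmid|\Aut(\Z_3)|$) and $3\cdot 19=57$ is an allowed order by part c) of the Theorem --- exactly parallel to the $\OO_5(G)=\Z_5$, $p=23$ situation, so your claim that the latter is the \emph{sole} surviving possibility is false. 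The paper closes both loose ends with one further idea you would need: once $G$ is known to be solvable, a Hall $\{5,19\}$-subgroup (respectively $\{5,23\}$- or $\{3,23\}$-subgroup in part b)) exists and is cyclic because $5\nmid 18$ (respectively $3\nmid 22$), yielding an element of order $95$ (respectively $69$) forbidden by the Theorem. Your ``descend to $G/\OO_5(G)$ and iterate'' sketch could likely be made rigorous, but as written it is not a proof and is not applied to the $p=19$ case at all. A smaller point: in part d) the step ``$n_7=|G|/7=2^a$'' presupposes $3,5\nmid|G|$, which is part of what must be shown; this is harmless only because d) is verbatim Lemma \ref{order} d), so nothing new is required there.
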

\begin{proof} In the proof we use the common notation $\OO_p(G)$ for the largest normal $p$-subgroup of $G$ \\
a) By Lemma \ref{order}, we may suppose that $|G| = 2^a\cdot3\cdot5\cdot29$ where $0 \leq a \leq 3$.
If $ \OO_p(G) \not= 1$ for $p=3,5$ or $29$ then $G$ contains elements of order $3\cdot29$ or $ 5 \cdot 29$ in contrast to  the Theorem.
Thus $p=2$ and there is an element of order $2 \cdot 29$ which contradicts Lemma \ref{B-W}. The only possibility left is that $\Alt_5$ is a normal
subgroup in $G$ according to Lemma \ref{simple}. In this case we have again an element of order $2 \cdot 29$, hence a contradiction. It follows that $|G| = 2^a\cdot29$ with
$0 \leq a \leq 2$. Note that in case $a=3$ there is an element of order $2 \cdot 29$. \\
b)  This is part c) of Lemma \ref{order}. \\
c) According to Lemma \ref{order}, we first consider the case $|G| = 2^a \cdot 3^b \cdot 19$ with $0 \leq a \leq 3$ and $b=0,1$.
Suppose that $a=2$ or $a=3$. Clearly, $\OO_2(G)=1$ otherwise there is an element of order $2\cdot 19$ in contrast to
Lemma \ref{B-W}. Furthermore $\OO_{19}(G) =1$ otherwise we get the same contradiction. Thus $\OO_3(G) \not=1$ since $G$ is solvable, and we get
an element of order $3 \cdot 19$. It follows $n_{19} = 2^x \equiv 1 (\bmod 19)$ with $x=1,2$, a contradiction. Thus $|G|=2^a\cdot3^b \cdot 19$ where $0 \leq a,b \leq 1$.

Now suppose, according to Lemma \ref{order}, that $|G| = 2^a\cdot 3^b \cdot 5 \cdot 19$ where $0 \leq a \leq 3$ and $b=0,1$.
Suppose that $3 \mid |G|$.
Clearly, $\OO_p(G) = 1$ for $p=5$ and $p=19$ since otherwise there exists an element of order $5 \cdot 19$, in contrast to the
Theorem. Furthermore $\OO_2(G)=1$ since there are no elements of order $2\cdot 19$, by Lemma \ref{B-W}.
If $\OO_3(G) \not=1$ then $G$ is solvable. Thus there exists a $\{5,19\}$-Hall subgroup. But such a group is cyclic, i.e.
there is an element of order $5 \cdot 19$, a contradiction to the Theorem again. Finally, if $\Alt_5$ is involved in $\Aut(C)$
then it must be a normal subgroup of $\Aut(C)$ and elements of order $19$ centralize $\Alt_5$, a contradiction.
  This shows that $3 \nmid |G|$ in the considered case.  Thus $|G|= 2^a  \cdot 5 \cdot 19$
and $G$ is solvable. Since $\OO_2(G)=1$ we get an element of order $5 \cdot 19$, a contradiction to the Theorem.
In summary, the case $|G|= 2^a \cdot 3^b \cdot 5 \cdot 19$ does not occur.
\end{proof}

\begin{remark} {\rm
a) Lemma \ref{simple} and Proposition \ref{order2} show that $\Aut(C)$ is solvable if a prime $p \geq 7$ divides $|G|$. \\
b) The largest group occurring in Proposition \ref{order2}  has order $920$. \\
c) In case a) the Sylow $29$-subgroup must be normal, in case c)
the Sylow $2$-subgroup is elementary abelian and normal.
  }
 \end{remark}


\begin{thebibliography}{99}


\bibitem{Borello}
M.~Borello, {\it The automorphism group of an extremal
$[72,36,16]$ code does not contain an element of order $6$}, to
appear in IEEE Trans. Inform. Theory.

\bibitem{BW}
M.~Borello and W.~Willems, {\it Elements of order $2p$ in a binary self-dual extremal code of length a multiple of $24$},
to appear, {\tt arXiv:1209.5071v1}

\bibitem{Stefka}
S.~Bouyuklieva, {\it On the automorphisms of order 2 with fixed points for the extremal self-dual codes of length 24m},
Des. Codes and Crypt. 25 (2002) 5-13.


\bibitem{conway-sloane} J.H.~Conway and N.J.A.~Sloane, {\it  A New Upper Bound on the Minimal Distance of Self-Dual Codes},
IEEE Trans. Inform. Theory 36 (1990) 1319-1333.


\bibitem{JW}
J.~de la Cruz and W.~Willems, {\it On extremal self-dual codes of length $96$}, IEEE Trans. Inform. Theory 57 (2011) 6820-6823.

\bibitem{wir-120}
J.~de la Cruz, {\it On extremal self-dual codes of length $120$},
PhD Thesis, Magdeburg, 2012.

\bibitem{Radinka}
R.~Dontcheva, A.J.~ van Zanten, and S.~ Dodunekov, {\it Binary Self-Dual Codes With Automorphisms of
Composite Order},  IEEE Trans. Inform. Theory
50 (2004) 311-318.

\bibitem{ecuacion-split} M.~El-Khamy and R. J.~McEliece, {\it The partition weight enumerator of MDS codes and its applications},
 International Symposium on Information Theory 2005,  {\tt arXiv.org/pdf/cs.IT/0505054.pdf}.

\bibitem{table-mark} M. Grassl, {\it Bounds on the minimum distance of linear codes and quantum codes}, {\tt http://www.codetables.de}.

\bibitem{HLTP} S.K.~Houghten, C.W.H.~Lam, L.H.~Thiel and J.A.~Parker,
{\it The extended quadratic residue code is the only $(48,24,12)$ self-dual doubly-even code},
IEEE Trans. Inform. Theory  48 (2003) 53-59.


\bibitem{pless-libro} W.C.~Huffman and V.~Pless,  Fundamentals of Error-Correcting Codes, Cambridge
University Press, Cambridge 2003.



\bibitem{Isaacs}
I.M.~Isaacs, {\rm Finite group theory}, Graduate Studies in Mathematics 92, AMS, Providence 2008.

\bibitem{MS}
C.L.~Mallows and N.J.A.~Sloane, {\it An upper bound for self-dual codes},
  Inform. and Control  22 (1973) 188-200.


\bibitem{Pless} V.~Pless, {\it On the uniqueness of the Golay codes},
J. Comb. Theory  5 (1968) 215-228.




\bibitem{Rains}
E.M.~Rains, {\it Shadow bounds for self-dual-codes},
 IEEE Trans.
Inform. Theory 44 (1998) 134-139.



\bibitem{Sim} J. Simonis,
{\it MacWilliams identities and coordinate partitions},  Linear
Algebra Appl.  216 (1995) 81-91.


\end{thebibliography}
\end{document}